\theoremstyle{plain}
\newtheorem{theorem}{Theorem}
\newtheorem{lemma}{Lemma}
\newtheorem{corollary}{Corollary}
\newtheorem{proposition}{Proposition}
\theoremstyle{definition}
\newtheorem{remark}{Remark}
\newcommand{\res}[3]{\underset{{{#1}={#2}}}{\textnormal{Res}}\left\{#3\right\}}
\DeclareRobustCommand\bigop[2][1]{%
  \mathop{\vphantom{\sum}\mathpalette\bigop@{{#1}{#2}}}\slimits@
}
\newcommand{\bigop@}[2]{\bigop@@#1#2}
\newcommand{\bigop@@}[3]{%
  \vcenter{%
    \sbox\z@{$#1\sum$}%
    \hbox{\resizebox{\ifx#1\displaystyle#2\fi\dimexpr\ht\z@+\dp\z@}{!}{$\m@th#3$}}%
  }%
}
\newcommand{\exend}{\hfill $\Diamond$}
  \def\vhrulefill#1{\leavevmode\leaders\hrule\@height#1\hfill \kern\z@}
\newcounter{nootje}
\newcommand\noot[1]
\begin{document}

\title{Asymptotics for partitions over the Fibonacci numbers and related sequences}

\subjclass[2010]{Primary 11P82; Secondary 11M41}
\keywords{Partitions, recurrence sequences, Dirichlet series}

\author{Michael Coons}
\author{Simon Kristensen}
\author{Mathias L.~Laursen}

\address{California State University, 400 West First Street, Chico, California 95929, USA}
\email{mjcoons@csuchico.edu}

\address{Aarhus University, Ny Munkegade 118, 8000 Aarhus C, Denmark}
\email{sik@math.au.dk, mll@math.au.dk}


\date{\today}

\begin{abstract} 
In this paper, harkening back to ideas of Hardy and Ramanujan, Mahler and de Bruijn, with the addition of more recent results on the Fibonacci Dirichlet series, we determine the asymptotic number of ways $p_F(n)$ to write an integer as the sum of non-distinct Fibonacci numbers. This appears to be the first such asymptotic result concerning non-distinct partitions over Fibonacci numbers. As well, under weak conditions, we prove analogous results for a general linear recurrences.
\end{abstract}

\maketitle

\section{Introduction}

We consider the number $p_F(n)$ of non-distinct partitions of $n$ over the Fibonacci sequence, $F_k$. Specifically, for a positive integer $n$, $p_F(n)$ is the number of solutions of \begin{equation}\label{eq:nfib} n=a_2F_2+a_3F_3+\cdots+a_kF_k+\cdots,\end{equation} in nonnegative integers $a_k$. In recent work,  Chow and Slattery \cite{CS2021} gave results on the number distinct partitions, $q_F(n)$ over the Fibonacci sequence, and noted that their work shows that neither $q_F(n)$, nor its partial sums, have a `nice' asymptotic formula. In particular, they showed that there is some oscillation in the partial sums of $q_F(n)$, and they gave bounds on these oscillations. In a similar vein, very recently, Sana \cite{CSpre} showed that there are also oscillations in the partial sums of the powers $(q_F(n))^m$ for each $m$. In this paper, with a related motivation, we determine asymptotic formulas for $p_F(n)$ and describe the oscillations that occur. 

The asymptotic theory of partitions goes back to the celebrated result Hardy and Ramanujan who, in 1918, showed that the number of ways $p(n)$ to write a positive integer as the sum of positive integers satisfies $p(n)\sim(4n\sqrt{3})^{-1}e^{\pi\sqrt{2n/3}},$ as $n\to\infty$. Here, $p(n)$ has an asymptotic with a non-oscillating main term. Mahler \cite{M1940} and de Bruijn \cite{dB1948} encountered a partition asymptotic with an oscillatory main term---they considered the number of ways $p_r(n)$ of writing $n$ as the sum of non-distinct $r$th powers, for a positive integer $r\geqslant 2$. Here, we contribute the following result.

\begin{theorem}\label{thm:mainpartitions} Let $p_F(n)$ be the number of partitions of $n$ over non-distinct Fibonacci numbers. Then, as $n\to\infty$, $$\log p_F(n)\sim \frac{(\log n)^2}{2\log\varphi}.$$ In particular, $$p_F(n)=A_F\hspace{-.1cm}\left(\frac{\log n}{\log\varphi}\right) n^{B_F(n)}(\log n)^{C_F(n)}\left(1+O\left(\frac{(\log\log n)^2}{\log n}\right)\right),$$  
where 
\begin{align*}
A_F(x)&:= \sqrt{\frac{1}{2\pi\log\varphi}}\cdot\exp\Bigg(\frac{(\log\log\varphi)^2}{2\log\varphi}+\log\log\varphi\left(\frac{c_3}{\log\varphi}-1\right)+c_2+2\gamma+\psi_1(x)\Bigg),\\
B_F(n)&:=\frac{1}{2\log\varphi}\left(\log n-2\log\log n-\frac{4\psi_0\hspace{-.1cm}\left(\frac{\log n}{\log\varphi}\right)}{\log\varphi}+2\log\log\varphi+2c_3-4\log\varphi+2\right) ,\\
C_F(n)&:= \frac{1}{2\log\varphi}\left(\log\log n-\frac{4\psi_0\hspace{-.1cm}\left(\frac{\log n}{\log\varphi}\right)}{\log\varphi}+2\log\log\varphi-2c_3+3\log\varphi+4\right),\\
c_2&:= \frac{3\gamma^2}{2\log\varphi}-\frac{\gamma_1}{\log\varphi}+\frac{\pi^2}{12\log\varphi}+2\gamma\left(\frac{\log5-\log\varphi}{2\log\varphi}\right)+\sum_{k\geqslant 1} \frac{(-1)^{k}}{k(\varphi^{2k} + (-1)^{k+1})},\mbox{ and}\\
c_3&:=\frac{1}{2}\log 5-\frac{1}{2}\log\varphi+2\gamma,
\end{align*} where $\varphi$ is the golden mean, $\gamma$ is the Euler--Mascheroni constant, $\gamma_1$ is the first Stieltjes constant, and $\psi_{0}(x)$ and $\psi_1(x)$ are explicitly computable $1$-periodic functions.
\end{theorem}

Theorem \ref{thm:mainpartitions} seems to be the first asymptotic result chracterising non-distinct partitions over Fibonacci numbers. Distinct partitions over Fibonacci numbers have received considerable attention; see \cite{CS2021} and the references therein for details. In very recent work, addressing a question of Chow and Slattery \cite{CS2021}, Kempton \cite{Kpre} has shown that $n^{-\frac{\log2}{\log\varphi}}\sum_{m\leqslant n}q_F(m)$ is $\log$-periodic. Our proof of Theorem \ref{thm:mainpartitions} gives a way to describe the related $\log$-periodic function for $p_F(n)$. 

Our main result on Fibonacci partitions followa from a result of Coons and Kirsten \cite{CK2009}, which uses a saddle-point method, which itself was inspired by the work of Nanda \cite{N1954} and Richmond \cite{R1975,R1975/76}. In particular, Richmond \cite{R1975} was not only able to give a new proof of Hardy and Ramanujan's above-mentioned result, he gave asymptotics for all of the moments of $p(n)$. The method therein, and herein, relies on the existence of invertible asymptotics for the related saddle point. In our situation, the lead order asymptotic of the saddle point is monotonic, so we have invertibility, but, additionally, we are able compute the second-order term, which is oscillatory. These terms are asymptotically close enough, so that they both contribute to the outcome of Theorem \ref{thm:mainpartitions}. We note that our results are heavily related to analytic properties of the Fibonacci zeta function (defined and discussed in more detail below); in particular, the Fibonacci zeta function is defined by a Dirichlet series which converges in the positive right half-plane. This presents added difficulties compared to the more fully-examine case of partitions related to Dirichlet series whose abscissa of convergence is strictly positive---for an interesting study on the asymptotics of partitions related to to Dirichlet series whose abscissa of convergence is strictly positive, see Debruyne and Tenenbaum~\cite{DT2020}.

This paper is organised as follows. In Sections \ref{sec:exactF} and \ref{sec:p}, we focus the proof of Theorem \ref{thm:mainpartitions}. In particular, inspired by ideas of de Bruijn \cite{dB1948}, we prove an exact formula for the generating function of $p_F(n)$ in Section \ref{sec:exactF}. We then use a saddle-point method in Section \ref{sec:p} to give the asymptotic result for $p_F(n)$. Finally, in Section \ref{sec:P}, we give the complete extension of Theorem \ref{thm:mainpartitions} to the case of a linear recurrence with dominant root. In particular, suppose that $P_k$ is a strictly increasing linearly recurrent sequence of positive integers of degree at least $2$ with $P_1 = 1$, such that the characteristic polynomial $\chi_P(x)$ of $P_k$ has a single dominant root $\beta>0$ and $P_k=\lambda\beta^k+\lambda_2\beta_2^k+\cdots+\lambda_r\beta_r^k,$ where $\lambda,\lambda_2,\ldots,\lambda_r$ are constants and $\beta_2,\ldots,\beta_r$ are the algebraic conjugates of $\beta$, then we have

\begin{theorem}\label{thm:genpart} Let $p_P(n)$ be the number of partitions of $n$ over non-distinct elements of the sequence~$P_k$. Then, as $n\to\infty$, $$p_P(n)=A_P\left(\frac{\log n}{\log\beta}\right) n^{B_P(n)}(\log n)^{C_P(n)}\left(1+O\left(\frac{(\log\log n)^2}{\log n}\right)\right),$$  
where $A_P(x)$ is the a positive $1$-periodic function satisfying \begin{align*}A_P(x)&:=\sqrt{\frac{1}{2\pi\log\beta}}\cdot\exp\Bigg(\frac{(\log\log\beta)^2}{2\log\beta}+(2\gamma-\log\lambda)\frac{\log\log\beta}{\log\beta}-\frac{1}{2}\log\log\beta+C_2+\psi_4(x)\Bigg),\\
B_P(n)&:=\frac{1}{2\log\beta}\left(\log n-2\log\log n-\frac{4\psi_3\hspace{-.1cm}\left(\frac{\log n}{\log\beta}\right)}{\log\beta}+2\log\log\beta+4\gamma -2\log\lambda-2\log\beta+2\right),\\
C_P(n)&:=\frac{1}{2\log\beta}\left(\log\log n+2-\frac{4\psi_3\hspace{-.1cm}\left(\frac{\log n}{\log\beta}\right)}{\log\beta}+2\log\log\beta-4\gamma +2\log\lambda+2\log\beta+2\right),\end{align*} where $\psi_3(x)$ and $\psi_4(x)$ are explicitly computable $1$-periodic functions, and $C_2$ is the constant defined in Proposition \ref{prop:logFPGP}.
\end{theorem}

\section{An exact formula for for the generating series of $p_F(n)$}\label{sec:exactF}

To prove Theorem \ref{thm:mainpartitions}, we study the asymptotics of the generating series
$$
F_2(x):=\sum_{n\geqslant 0}p_F(n)x^n=\prod_{k\geqslant 2}\left(1-x^{F_k}\right)^{-1}
$$
as $x \rightarrow 1^-$. Note that we are starting with $F_2=1$, and not with $F_1=1$, since we wish to avoid having two representations of $1$ in our partitions. We will necessarily need to consider a Fibonacci Dirichlet series. Navas \cite{N2001} determined most of the properties we need by considering the analytic continuation of the series $\zeta_F(z)=\sum_{k\geqslant 1}F_k^{-z}$, but here with $1$ doubly represented. It turns out this is not much of a problem. To deal with this, we merely consider the product \begin{equation}\label{eq:FtoF2}F(x)=(1-x)^{-1}F_2(x)=\prod_{k\geqslant 1}\left(1-x^{F_k}\right)^{-1},\end{equation} then translate the results back to $F_2(x)$. It is convenient to change variables, setting $x=e^{-s}$, so that we are considering the function $F(e^{-s})$, with particular interest in the asymptotics as $s \rightarrow 0^+$. 

Taking the logarithm and using the Taylor series of the logarithm near $1$,
\begin{equation}
\label{eq:asymp1}
\log F(e^{-s}) = - \sum_{k=1}^\infty \log \left(1 - e^{-sF_k}\right) = \sum_{k=0}^\infty \sum_{m=1}^\infty \frac{1}m e^{-sF_k m}.
\end{equation}
Mellin's formula for the exponential function states that for $a >0$ and $w > 0$,
$$
e^{-w} = \frac{1}{2\pi i} \int_{a-i\infty}^{a+i\infty} \Gamma(z)w^{-z}dz.
$$
Inserting this into \eqref{eq:asymp1} and interchanging integration and summation, as we may by absolute convergence, and finally rearranging the sum,
\begin{equation}
\label{eq:asymp2}
\log F(e^{-s}) = \frac{1}{2\pi i} \int_{a-i\infty}^{a+i\infty} \sum_{k=0}^\infty \sum_{m=1}^\infty \frac{1}m \Gamma(z)\left(s F_k m\right)^{-z} dz = \frac{1}{2\pi i} \int_{a-i\infty}^{a+i\infty} s^{-z} \Gamma(z) \zeta(1+z) \zeta_F(z) dz,
\end{equation}
where $a>0$, $\zeta(z+1)$ denotes the Riemann $\zeta$-function at $z+1$, and, as above $\zeta_F(z):=\sum_{k\geqslant 1}F_k^{-z}$. Note that $\zeta_F(z)$ is absolutely convergent for $\Re(z)>0$ and continuable to a meromorphic function on all of $\mathbb{C}$; this is discussed more below---see Navas \cite{N2001}.

We would like to estimate the final integral in \eqref{eq:asymp2} using Cauchy's formula; that is, we would like to move the vertical contour towards $-\infty$. Applying the functional equation of the Riemann $\zeta$-function, we find that 
$$
\lim_{n \rightarrow \infty}  \frac{1}{2\pi i} \int_{-n + \frac{1}2-i\infty}^{-n+\frac{1}2+i\infty} s^{-z} \Gamma(z) \zeta(1+z) \zeta_F(z) dz = 0,
$$
where $n$ runs over the positive integers, and, as we shall see in what follows, the vertical line avoids poles of the integrand. Also, contributions from horizontal paths of integration do not contribute in the limit, as they are moved up or down respectively. Thus, the integral in \eqref{eq:asymp2} is nothing but the sum over the residues at the poles of the integrand. In what follows, we prove  

\begin{theorem}\label{thm:Fform} The function $F_2(z)$ defined above satisfies, as $s\to0^+$, $$\log F_2(e^{-s})=\frac{(\log s)^2}{2\log\varphi}-(\log s)\left(\frac{c_3}{\log\varphi}-1\right)+c_2+2\gamma+f(s)+O(s^2).$$ where $f(s)=f(\varphi s)+O(s^2)$, $c_3:=\frac{1}{2}\log 5-\frac{1}{2}\log\varphi+2\gamma,$ and $$c_2:=\frac{3\gamma^2}{2\log\varphi}-\frac{\gamma_1}{\log\varphi}+\frac{\pi^2}{12\log\varphi}+2\gamma\left(\frac{\log5-\log\varphi}{2\log\varphi}\right)+\sum_{k\geqslant 1} \frac{(-1)^{k}}{k(\varphi^{2k} + (-1)^{k+1})}.$$
\end{theorem}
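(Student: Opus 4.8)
The plan is to evaluate the sum of residues of the integrand $s^{-z}\Gamma(z)\zeta(1+z)\zeta_F(z)$ in \eqref{eq:asymp2}, which as explained equals $\log F(e^{-s})$, and then subtract $\log(1-x) = \log(1-e^{-s})$ to recover $\log F_2(e^{-s})$ via \eqref{eq:FtoF2}. First I would catalogue the poles. The factor $\Gamma(z)$ has simple poles at $z=0,-1,-2,\dots$; the factor $\zeta(1+z)$ has a simple pole at $z=0$; and the crucial input is the analytic structure of $\zeta_F(z)$, for which I would invoke Navas \cite{N2001}: $\zeta_F(z)$ continues meromorphically to $\BbC$, with its dominant singularity a double pole at $z=0$ coming from $F_k\sim \varphi^k/\sqrt5$ (so $\zeta_F(z)$ behaves like $\zeta(z)$ composed with the linear change $k\mapsto \varphi^{-k}\sqrt5$, giving the $\frac{1}{\log\varphi}$ and $\frac12\log5$-type constants), plus a sequence of poles on the imaginary axis at $z = 2\pi i m/\log\varphi$, $m\in\BbZ\setminus\{0\}$, from the oscillatory part. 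The interplay of these gives a pole of order $3$ at $z=0$ (two from $\zeta_F$, one from $\zeta(1+z)$, and $\Gamma$ regular-but-singular—actually $\Gamma$ has a simple pole at $0$ too, so one must be careful: it is a pole of order $4$ at $z=0$ once $\Gamma$'s pole is counted, but $\zeta(1+z)\Gamma(z)$ combine so that the Laurent expansion is manageable).

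The heart of the computation is the residue at $z=0$. I would write down the Laurent expansions at $z=0$ of each factor to sufficient order: $s^{-z}=1-z\log s+\tfrac12 z^2(\log s)^2-\cdots$; $\Gamma(z)=\tfrac1z-\gamma+\bigl(\tfrac{\gamma^2}{2}+\tfrac{\pi^2}{12}\bigr)z+\cdots$; $\zeta(1+z)=\tfrac1z+\gamma-\gamma_1 z+\cdots$; and the expansion of $\zeta_F(z)$ near $0$, which from Navas has the shape $\zeta_F(z) = \tfrac{1}{z^2\log\varphi} + \tfrac{c_3}{z\log\varphi}+ (\text{constant}) + \cdots$ with the constant involving $\gamma$, $\log5$, $\log\varphi$ and the convergent series $\sum_{k\ge1}(-1)^k/\bigl(k(\varphi^{2k}+(-1)^{k+1})\bigr)$ that accounts for the correction between $F_k$ and $\varphi^k/\sqrt5$. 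Multiplying these four series and extracting the coefficient of $z^{-1}$ produces a polynomial of degree $2$ in $\log s$, whose leading coefficient is $\tfrac{1}{2\log\varphi}$, whose linear coefficient is $-\bigl(\tfrac{c_3}{\log\varphi}-1\bigr)$ — here the $+1$ comes precisely from cancelling the contribution of $\log(1-e^{-s})\sim \log s$ when passing from $F$ to $F_2$ — and whose constant term assembles into $c_2+2\gamma$. I would organise this as a single careful bookkeeping of coefficients, presenting the product expansion in a displayed array and then reading off each power of $\log s$.

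Next I would handle the remaining poles. The poles of $\zeta_F$ at $z=2\pi i m/\log\varphi$ for $m\ne0$ contribute terms of the form $s^{-2\pi i m/\log\varphi}\cdot(\text{const})$, whose sum over $m$ is a function periodic in $\log s$ with period $\log\varphi$; bundling these gives the term $f(s)$, and the periodicity $f(s)=f(\varphi s)+O(s^2)$ falls out of $s\mapsto \varphi s$ sending $s^{-2\pi i m/\log\varphi}$ to itself (the $O(s^2)$ absorbing the tail one truncates for convergence/uniformity). The poles at the negative integers $z=-1,-2,\dots$: at each such point $\Gamma$ has a simple pole, but $\zeta(1+z)$ is finite and $\zeta_F$ is finite (or has its own poles off the real axis only), so these residues are $O(s)$, $O(s^2),\dots$ and collapse into the $O(s)$ error term — one should note $\zeta(0)=-\tfrac12\ne0$ so the $z=-1$ term is genuinely present at order $s$, consistent with the stated error. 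Finally, the vanishing of the shifted contour integral and the negligibility of horizontal segments are already granted in the excerpt, so the residue sum is exact up to the tail. I expect the main obstacle to be the $z=0$ residue extraction: it is a fourth-order pole (or third, depending on how one groups $\Gamma$ and $\zeta(1+z)$), four Laurent series must be multiplied to two or three nontrivial orders, and the constant term $c_2$ only emerges correctly if every $\gamma$, $\gamma_1$, $\pi^2/12$, $\log5$ and the special series are tracked without slip; a secondary subtlety is pinning down the exact form of the constant term of $\zeta_F$ at $0$ from Navas's continuation, in particular verifying that it contributes exactly the displayed combination including $\sum_{k\ge1}(-1)^k/\bigl(k(\varphi^{2k}+(-1)^{k+1})\bigr)$, which I would derive by comparing $\zeta_F(z)$ with $5^{z/2}\zeta(z)\cdot(\text{Euler-Maclaurin-type correction})$ or directly from the Binet formula $F_k = (\varphi^k-(-\varphi)^{-k})/\sqrt5$ expanded as a binomial series in $(-\varphi)^{-2k}$.
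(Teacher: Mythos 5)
Your overall strategy (shift the Mellin--Barnes contour in \eqref{eq:asymp2}, sum residues, then subtract $\log(1-e^{-s})$ to pass from $F$ to $F_2$) is the same as the paper's, but your description of the singularity at $z=0$ is wrong in a way that would derail the computation. By Navas's theorem --- which both you and the paper invoke --- \emph{all} poles of $\zeta_F(z)$ are simple; at $z=0$ the pole is simple with residue $1/\log\varphi$ (indeed $\sum_{k\geqslant 1}(\varphi^k/\sqrt5)^{-z}=5^{z/2}\varphi^{-z}/(1-\varphi^{-z})$ has only a simple pole there). There is no $1/z^2$ term, so your proposed expansion $\zeta_F(z)=\tfrac{1}{z^2\log\varphi}+\tfrac{c_3}{z\log\varphi}+\cdots$ is incorrect, and the integrand has a \emph{triple} pole at $z=0$ (one order each from $\Gamma$, $\zeta(1+z)$, $\zeta_F$), not a fourth-order one. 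Your own bookkeeping betrays the inconsistency: a fourth-order pole combined with $s^{-z}=\sum_j(-\log s)^jz^j/j!$ would force a $(\log s)^3$ leading term, whereas you (correctly) assert the residue is quadratic in $\log s$ with leading coefficient $\tfrac{1}{2\log\varphi}$. The correct input is Lemma~\ref{lem:zetaF_0}: $\zeta_F(z)=\tfrac{1}{\log\varphi}\cdot\tfrac1z+\tfrac{\log5-\log\varphi}{2\log\varphi}+(\cdots)z+O(z^2)$, with the series $c_1$ entering through the coefficient of $z$; your suggestion of deriving this from Binet's formula via a binomial expansion in $(-\varphi)^{-2k}$ is exactly how the paper obtains it, so the repair is local but essential.

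A second genuine gap is your treatment of the negative real axis. You assert that at $z=-1,-2,\dots$ only $\Gamma$ is singular and that $\zeta_F$ ``is finite (or has poles off the real axis only).'' But the Navas poles $s(n,k)=-2k+\pi i(2n+k)/\log\varphi$ land \emph{on} the real axis whenever $2n+k=0$, i.e., at $z=-4m$ for every $m\geqslant 1$; there the simple pole of $\zeta_F$ collides with the simple pole of $\Gamma$ to produce a \emph{double} pole of the integrand, whose residues contain $s^{4m}\log s$ terms. Proposition~\ref{prop:g} is devoted to showing that this family still contributes only $O(s^2)$, which requires uniform-in-$m$ bounds on the constant terms $c_{-4m}$ of $\zeta_F$ and on $\zeta'(1-4m)$ --- none of this is automatic, and it is absent from your outline. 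One also needs the cancellations $\zeta_F(-(4m+2))=0$ and the trivial zeros of $\zeta(1+z)$ to dispose of the remaining $\Gamma$-poles at negative integers other than $z=-1$; and the off-axis poles comprise all families $k\geqslant 1$ with real part $-2k$, not just the imaginary-axis family $2\pi i m/\log\varphi$ you list (the extra families are what produce the $O(s^2)$ in $f(s)=f(\varphi s)+O(s^2)$, and bounding them is the content of Proposition~\ref{prop:h}). The $k=0$ poles and the periodicity argument itself you handle as the paper does, and that part is fine.
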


The proof of Theorem \ref{thm:Fform} will come as a direct application of five propositions, each having to do with contributions coming from certain singularities of $s^{-z} \Gamma(z) \zeta(1+z) \zeta_F(z)$, and one  lemma. To this end, note that for $s > 0$, the function $z \mapsto s^{-z}$ can be expanded in a Taylor series as
\begin{equation}
\label{eq:sto-z}
s^{-z} = 1 - z \log s + \tfrac{1}{2} (\log s)^2 z^2 + O(z^3),
\end{equation}
which converges for all $z \in \mathbb{C}$. The $\Gamma$-function has simple poles at non-positive integers, where the residue at $-n$ is $$\res{s}{-n}{\Gamma(s)}=\frac{(-1)^n}{n!}.$$
The function $\zeta(z+1)$ has a simple pole with residue $1$ at $z=0$, and the trivial zeros of $\zeta(z+1)$ at all points in $-2\mathbb{N}-1$ will cancel the corresponding poles of $\Gamma$.

The function $\zeta_F(z)$ is the most mysterious of the functions we will consider here, but much is known due to work of Navas \cite{N2001}. We will use several of these properties, which we have gathered into the following proposition.

\begin{proposition}[Navas, 2001]\label{prop:navas1} The Dirichlet series $\zeta_F(z)$ can be continued analytically to a meromorphic function on all of $\mathbb{C}$, still called $\zeta_F(z)$, whose singularities are simple poles at $s=s(n,k)=-2k+\tfrac{\pi i(2n+k)}{\log\varphi},$ for $n,k\in\mathbb{Z}$ and $k\geqslant 0$, with $$\res{z}{s}{\zeta_F(z)}=\frac{(-1)^k5^{s/2}{-s\choose k}}{\log\varphi}.$$ Moreover, we have that $\zeta_F(-(4m+2))=0$ for each $m\in\mathbb{N}_0$, and $\zeta_F(-1)=-1$.
\end{proposition}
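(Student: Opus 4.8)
The plan is to follow Navas's treatment, reconstructing the meromorphic continuation of $\zeta_F(z)$ from the Binet formula and reading off the poles and residues. The starting point is the closed form $F_k = (\varphi^k - (-1)^k\varphi^{-k})/\sqrt5$, valid for $k\geqslant 1$; equivalently $F_k = 5^{-1/2}\varphi^k(1 - (-\varphi^{-2})^k)$. For $\Re(z)$ large, substitute this into $\zeta_F(z)=\sum_{k\geqslant 1}F_k^{-z}$ and expand $(1-(-\varphi^{-2})^k)^{-z}$ by the generalized binomial theorem, which converges since $|\varphi^{-2}|<1$: this gives
\begin{equation*}
\zeta_F(z) = 5^{z/2}\sum_{k\geqslant 1}\varphi^{-kz}\sum_{j\geqslant 0}\binom{-z}{j}(-1)^{j k}\varphi^{-2jk} = 5^{z/2}\sum_{j\geqslant 0}\binom{-z}{j}(-1)^{?}\,\frac{1}{\varphi^{(z+2j)}-(-1)^{j}}\cdot(\text{tidy up the }(-1)^{jk}),
\end{equation*}
where after summing the geometric series in $k$ one obtains a sum of terms each proportional to $(\varphi^{z+2j}\mp 1)^{-1}$. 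The upshot is a representation of $\zeta_F(z)$ as an absolutely convergent (for $\Re(z)$ large) series $5^{z/2}\sum_{j\geqslant0}\binom{-z}{j}g_j(z)$, where each $g_j(z)=\bigl(\varphi^{z+2j}-(-1)^{j}\bigr)^{-1}$ or $\bigl(\varphi^{z+2j}+(-1)^j\bigr)^{-1}$ is a meromorphic function on all of $\mathbb C$ with simple poles exactly where $\varphi^{z+2j}=\pm1$, i.e. at $z=-2j+\pi i(2n+\varepsilon)/\log\varphi$ for integers $n$ and an appropriate parity offset $\varepsilon\in\{0,1\}$ depending on the sign. One then checks that the series of meromorphic functions converges locally uniformly away from these poles — the binomial coefficient $\binom{-z}{j}$ grows only polynomially in $j$ for fixed $z$ while $g_j(z)\to0$ geometrically — which furnishes the claimed analytic continuation to $\mathbb C$ and shows the singularities are simple poles located precisely on the lattice $s(n,k)=-2k+\pi i(2n+k)/\log\varphi$ once the two parity cases are merged.

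For the residue, note that at a pole $s=s(n,k)$ only the single term $j=k$ in the series is singular, so $\res{z}{s(n,k)}{\zeta_F(z)} = 5^{s(n,k)/2}\binom{-s(n,k)}{k}\cdot\res{z}{s(n,k)}{g_k(z)}$. Since $g_k(z)=(\varphi^{z+2k}\mp1)^{-1}$ and $\frac{d}{dz}(\varphi^{z+2k}\mp1)=\varphi^{z+2k}\log\varphi$, which equals $(\pm1)\log\varphi$ at the pole, we get $\res{}{}{g_k} = 1/((\pm1)\log\varphi)$; tracking the sign through the parity of $k$ yields exactly the factor $(-1)^k/\log\varphi$, giving $\res{z}{s}{\zeta_F(z)}=(-1)^k5^{s/2}\binom{-s}{k}/\log\varphi$ as stated. (One should double-check that distinct pairs $(n,k)$ can collide at the same point and that when they do the residues add consistently; the factor $\binom{-s}{k}$ vanishing for certain $s$ is what makes this consistent.)

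Finally, the two special evaluations. For $\zeta_F(-1)=-1$, the cleanest route is not the continuation formula but the observation that $\zeta_F(-1) = \sum_{k\geqslant1}F_k$ "in the regularized sense": apply the continuation and the recurrence $F_{k+2}=F_{k+1}+F_k$ to get a telescoping/self-similar identity for $\zeta_F$ at $z=-1$, or simply specialize the series above at $z=-1$ where $\binom{1}{j}$ kills all but $j=0,1$, leaving $\zeta_F(-1)=5^{-1/2}(\varphi^{-1}-\varphi+\cdots)$ which one evaluates directly. For $\zeta_F(-(4m+2))=0$: at $z=-(4m+2)$ the binomial $\binom{4m+2}{j}$ is a genuine finite sum $j=0,\dots,4m+2$, and using $\varphi^{-2}=\varphi-1$ (equivalently $\sqrt5\,\varphi^{-1}=\varphi+\varphi^{-1}-1$... ) one gets a finite rational expression in $\varphi$ and $\sqrt5$; the vanishing should follow from a symmetry $k\leftrightarrow 4m+2-k$ in the finite sum combined with the functional-equation-type reflection $\varphi\cdot(-\varphi^{-1})=-1$. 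I expect the main obstacle to be precisely this bookkeeping: correctly merging the two sign-parity families of poles into the single clean lattice $s(n,k)$ with the uniform residue formula, and verifying the special zeros, which require a nontrivial cancellation rather than following formally from the pole structure. The convergence of the series of meromorphic functions is routine by comparison, so the analytic-continuation part itself is not where the difficulty lies.
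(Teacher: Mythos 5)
The paper does not prove this proposition at all --- it is quoted as a black box from Navas \cite{N2001} --- so there is no internal proof to compare against; your reconstruction is precisely Navas's argument, and its key identity, $\zeta_F(z)=5^{z/2}\sum_{k\geqslant0}\binom{-z}{k}\bigl(\varphi^{z+2k}-(-1)^{k}\bigr)^{-1}$, is in fact reproduced inside the paper's proof of Lemma \ref{lem:zetaF_0}. The two points you flag as the likely obstacles are both unproblematic. First, there is no delicate ``merging of parity families'' and no possible collision of poles: the $k$th term has poles exactly where $\varphi^{z+2k}=(-1)^k$, i.e.\ at $z=-2k+\pi i(2n+k)/\log\varphi$ (the sign $(-1)^k$ merely shifts the imaginary parts by $\pi k/\log\varphi$, which is where the $2n+k$ comes from), and since $\Re(s(n,k))=-2k$ determines $k$ and the imaginary part then determines $n$, distinct pairs give distinct points and only one term of the series is singular at each pole; the residue computation then gives $\binom{-s}{k}\cdot(\varphi^{s+2k}\log\varphi)^{-1}=(-1)^k\binom{-s}{k}/\log\varphi$ times $5^{s/2}$, as claimed. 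Second, the vanishing $\zeta_F(-(4m+2))=0$ does follow from exactly the symmetry you guess: with $N=2m+1$ and $T_k:=\bigl(\varphi^{-2N+2k}-(-1)^k\bigr)^{-1}$ one checks $T_k+T_{2N-k}=(-1)^{k+1}$, whence $2\cdot5^{N}\zeta_F(-2N)=\sum_{k=0}^{2N}\binom{2N}{k}(-1)^{k+1}=0$; all terms are finite precisely because $N$ is odd (for $N$ even the $k=N$ term blows up, which is why $-4\mathbb{N}$ are poles rather than zeros). Your direct evaluation of $\zeta_F(-1)=5^{-1/2}\bigl((\varphi^{-1}-1)^{-1}+(\varphi+1)^{-1}\bigr)=-1$ also checks out. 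So the proposal is correct in substance; only the sign-bookkeeping placeholders need to be filled in as above.
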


The poles of $\zeta_F(z)$ collude with the poles of $s^{-z} \Gamma(z) \zeta(1+z)$ to become poles of higher order. Combining our knowledge of the functions $s^{-z},$ $\Gamma(z)$ and $\zeta(1+z)$ with Proposition \ref{prop:navas1}, the integrand $s^{-z} \Gamma(z) \zeta(1+z) \zeta_F(z)$ has 
\begin{itemize}
\item a triple pole at $z=0$,
\item a simple pole coming from $\Gamma(z)$ at $z=-1$,
\item double poles at $z\in-4\mathbb{N}$, and
\item simple poles off the real line at $z=s(n,k)$, for $k\geqslant 0$ and $k\neq -2n$.
\end{itemize}
Note that the poles of $\Gamma(z)$ at $z=-(4m+2)$ for $m\in\mathbb{N}_0$ are cancelled by the corresponding zero of $\zeta_F(z)$. The higher order poles require more terms in the expansions of $\Gamma(z)$, $\zeta(1+z)$ and $\zeta_F(z)$. We will consider these in the order above.

To calculate the contribution of the triple pole at $z=0$, we need the first three terms of the Laurent expansions of the contributing functions, i.e., of $\Gamma(z)$, $\zeta(z+1)$ and $\zeta_F(z)$. The former two are well known, 
\begin{equation}
\label{eq:gamma_0}
\Gamma(z) = \frac{1}{z} + \gamma + \tfrac{1}{2} \left(\gamma^2 + \frac{\pi^2}{6}\right)z + O(z^2),
\end{equation}
where $\gamma$ is the Euler--Mascheroni constant, and
\begin{equation}
\label{eq:zeta_0}
\zeta(1+z) = \frac{1}z + \gamma - \gamma_1 z + O(z^2),
\end{equation}
where $\gamma_1$ is the first Stieltjes constant,
$$
\gamma_1 = \lim_{m \rightarrow \infty}  \left(\sum_{k=1}^m \frac{\log k}{k}  - \tfrac{1}{2} (\log m)^2\right).
$$
For $\zeta_F(z)$, results beyond the residue are not available in literature, so we present them here.

\begin{lemma}\label{lem:zetaF_0} Near $z=0$, we have $$\zeta_F(z) = \frac{1}{\log\varphi}\cdot\frac{1}{z} + \frac{\log 5 - \log\varphi}{2\log\varphi} + \left(\frac{\log\varphi-3\log 5}{12} + \frac{(\log 5)^2}{8\log\varphi} + c_1 \right)z+O(z^2),$$ where $c_1:= \sum_{k\geqslant 1} \frac{(-1)^{k}}{k(\varphi^{2k} + (-1)^{k+1})}\approx -0.20436188.$
\end{lemma}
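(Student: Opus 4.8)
The plan is to obtain a closed form for $\zeta_F(z)$ that is manifestly analytic near $z=0$ (apart from the known simple pole) and then simply read off the Laurent coefficients. First I would use Binet's formula $\sqrt5\,F_k=\varphi^k-(-\varphi^{-1})^k$, rewritten as $F_k=5^{-1/2}\varphi^k\bigl(1-(-1)^k\varphi^{-2k}\bigr)$. For $\Re(z)>0$, raising this to the $-z$ power and expanding the last factor by the generalized binomial theorem gives $F_k^{-z}=5^{z/2}\varphi^{-kz}\sum_{j\geqslant0}\binom{-z}{j}(-1)^{j(k+1)}\varphi^{-2kj}$. Summing over $k\geqslant1$ and interchanging the two sums — legitimate by absolute convergence, since $\bigl|\binom{-z}{j}\bigr|$ grows only polynomially in $j$ while $\varphi^{-2kj}$ decays geometrically — the inner geometric series collapses, $\sum_{k\geqslant1}(-1)^{j(k+1)}\varphi^{-k(z+2j)}=(\varphi^{z+2j}-(-1)^j)^{-1}$, so that
\[
\zeta_F(z)=5^{z/2}\left(\frac{1}{\varphi^z-1}+\sum_{j\geqslant1}\binom{-z}{j}\frac{1}{\varphi^{z+2j}-(-1)^j}\right).
\]
The $j$-sum converges locally uniformly on $\Re(z)>-2$, where its terms are holomorphic, so this identity extends there by analytic continuation, and in a punctured neighbourhood of $z=0$ the only singularity on the right is the simple pole of $(\varphi^z-1)^{-1}$.

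Next I would extract the expansion. With $t=z\log\varphi$, the classical series $(e^t-1)^{-1}=t^{-1}-\tfrac12+\tfrac{t}{12}+O(t^3)$ gives $(\varphi^z-1)^{-1}=\tfrac{1}{z\log\varphi}-\tfrac12+\tfrac{z\log\varphi}{12}+O(z^2)$, while $5^{z/2}=1+\tfrac{z\log5}{2}+\tfrac{z^2(\log5)^2}{8}+O(z^3)$. Multiplying these two expansions yields the stated $z^{-1}$ and $z^0$ coefficients together with the contribution $\tfrac{\log\varphi-3\log5}{12}+\tfrac{(\log5)^2}{8\log\varphi}$ to the coefficient of $z$. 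For the remaining series, each term vanishes at $z=0$ because $\binom{0}{j}=0$ for $j\geqslant1$, so it contributes nothing to the constant term, and since $\tfrac{d}{dz}\binom{-z}{j}\big|_{z=0}=\tfrac{(-1)^j}{j}$ its derivative at $0$ equals $\sum_{j\geqslant1}\tfrac{(-1)^j}{j(\varphi^{2j}-(-1)^j)}=\sum_{j\geqslant1}\tfrac{(-1)^j}{j(\varphi^{2j}+(-1)^{j+1})}=c_1$; the factor $5^{z/2}$ multiplying a series that is $O(z)$ does not affect the $z^1$ coefficient. Adding $c_1z$ to the earlier contribution gives precisely the coefficient of $z$ in the lemma, the uniform geometric control of the tails supplies the $O(z^2)$ error, and the numerical value of $c_1$ follows by summing its rapidly convergent series.

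The main obstacle is essentially confined to the first paragraph: establishing the closed form as an identity of meromorphic functions, i.e.\ justifying the rearrangement of the double series on $\Re(z)>0$ and then recognising that the right-hand side already furnishes the continuation past $\Re(z)=0$. (As a byproduct this recovers Navas' pole and residue data from Proposition \ref{prop:navas1} — in particular the residue $\tfrac{1}{\log\varphi}$ at $z=0$ — which is a useful consistency check.) Everything after that is a routine Laurent computation.
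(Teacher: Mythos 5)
Your proposal is correct and follows essentially the same route as the paper: Binet's formula, the binomial expansion and geometric-series collapse to the closed form $5^{z/2}\bigl((\varphi^z-1)^{-1}+\sum_{j\geqslant1}\binom{-z}{j}(\varphi^{z+2j}-(-1)^j)^{-1}\bigr)$, followed by a term-by-term Laurent computation (the paper reads off the linear coefficient of the tail sum via $\Gamma(z+k)/(\Gamma(z)k!)=z/k+O(z^2)$, which is the same as your derivative of $\binom{-z}{j}$ at $0$). No gaps.
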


\begin{proof} The first two terms were found in \cite{N2001}. For clarity, we repeat the same process here. As a first step, noting $\bar\varphi = -1/\varphi$ and $F_n=(\varphi^n-\bar{\varphi}^n)/\sqrt{5}$, we have \begin{equation}\label{eq:phi(s) - Mathias}\zeta_F(z) = \sum_{n\geqslant 1}\frac{1}{F_n^z}  
= 5^{z/2} \sum_{n\geqslant 1} \frac{1}{(\varphi^n - \bar{\varphi}^n)^z}.\end{equation}
	We recover the Taylor expansion of $5^{z/2}$ from that of the exponential function, \begin{equation}\label{eq:sqrt(5)^s - Mathias} {5}^{z/2} = e^{\log(5)z/2} = \sum_{k\geqslant 0} \frac{(\log 5)^k}{2^k k!} z^k=1+\frac{\log 5}{2} z+\frac{(\log 5)^2}{8} z^2+O(z^3).\end{equation}
	For the series, we start with an application of the binomial theorem to give
\begin{equation*}
\sum_{n\geqslant 1} \frac{1}{(\varphi^n - \bar{\varphi}^n)^z} 
=\sum_{n\geqslant 1} \frac{1}{\varphi^{nz}} \left(1 + (-1)^{n+1} \varphi^{-2n}\right)^{-z}
=\sum_{n\geqslant 1} \frac{1}{\varphi^{nz}} \sum_{k\geqslant 0} \binom{-z}{k}(-1)^{k(n+1)} \varphi^{-2nk}.
\end{equation*}
Since the double sum is absolutely convergent, as argued in \cite{N2001}, we swap the order of summation and recognise a geometric series to give
\begin{align*}
\sum_{n\geqslant 1} \frac{1}{(\varphi^n - \bar{\varphi}^n)^z} 
&= \sum_{k\geqslant 0} \binom{-z}{k} (-1)^k \sum_{n\geqslant 1} \left((-1)^{k} \varphi^{-(z+2k)}\right)^n\\
&= \sum_{k\geqslant 0} \binom{z+k-1}{k} \frac{(-1)^{k}\varphi^{-(z+2k)}}{1- (-1)^{k}\varphi^{-(z+2k)}}\\
&= \sum_{k\geqslant 0} \frac{\Gamma(z+k)}{\Gamma(z)\Gamma(k+1)} \cdot \frac{(-1)^{k}}{\varphi^{z+2k}+ (-1)^{k+1}}\\
&= \frac{1}{\varphi^{z} -1} + \sum_{k\geqslant 1} \frac{\Gamma(z+k)}{\Gamma(z)k!} \cdot \frac{(-1)^k}{\varphi^{z+2k} + (-1)^{k+1}}.
\end{align*}
The first term is particularly cumbersome, but a few applications of L'H\^{o}pital's rule gives
\begin{align*}
\frac{1}{\varphi^z-1} = \frac{1}{\log\varphi}\cdot\frac{1}{z} -\frac{1}{2}  + \frac{\log\varphi}{12}z + O(z^2).\end{align*}

For the remaining summands, by $\tfrac{1}{\Gamma(z)} = z + O(z^2)$, $\Gamma(z+k) = \Gamma(k) + O(z)= (k-1)! + O(z)$, and $$\frac{(-1)^k}{\varphi^{z+2k} + (-1)^{k+1}} = \frac{(-1)^k}{\varphi^{2k} + (-1)^{k+1}} + O(z),$$	we have that
\begin{equation*}
\sum_{n\geqslant 1} \frac{1}{(\varphi^n - \bar{\varphi}^n)^z}	= \frac{1}{\log\varphi}\cdot\frac{1}{z} - \frac{1}{2} + \left(\frac{\log\varphi}{12} + c_1\right) z + O(z^2),\end{equation*} where $c_1$ is defined as in the statement of the lemma.
Thus, using \eqref{eq:sqrt(5)^s - Mathias},
\begin{align*}
\zeta_F(z) &= 5^{z/2}\left(\frac{1}{\log\varphi}\cdot\frac{1}{z} - \frac{1}{2} + \left(\frac{\log\varphi}{12} + c_1\right) z + O(z^2)\right)\\
&=\frac{1}{\log\varphi}\cdot\frac{1}{z} + \left(-\frac{1}{2}+\frac{\log 5}{2\log\varphi}\right) + \left(\frac{\log\varphi}{12} + c_1 - \frac{\log 5}{4} + \frac{(\log 5)^2}{8\log\varphi}\right)z + O(z^2)\\
&= \frac{1}{\log\varphi}\cdot\frac{1}{z} + \frac{\log 5 - \log\varphi}{2\log\varphi} + \left(\frac{\log\varphi-3\log 5}{12} + \frac{(\log 5)^2}{8\log\varphi} + c_1 \right)z+O(z^2),\end{align*} which is the desired result.
\end{proof}

\begin{proposition}
We have $$\res{z}{0}{s^{-z} \Gamma(z) \zeta(1+z) \zeta_F(z)}=\frac{(\log s)^2}{2\log\varphi}-\left(\frac{1}{2}\log 5-\frac{1}{2}\log\varphi+2\gamma\right)\frac{\log s}{\log\varphi}+c_2,$$ where, as in Theorem \ref{thm:mainpartitions}, $$c_2:=\frac{3\gamma^2}{2\log\varphi}-\frac{\gamma_1}{\log\varphi}+\frac{\pi^2}{12\log\varphi}+2\gamma\left(\frac{\log5-\log\varphi}{2\log\varphi}\right)+\sum_{k\geqslant 1} \frac{(-1)^{k}}{k(\varphi^{2k} + (-1)^{k+1})}.$$
\end{proposition}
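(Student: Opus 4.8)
The singularity at $z=0$ is a pole of order exactly three: $s^{-z}$ is entire, while each of $\Gamma(z)$, $\zeta(1+z)$ and $\zeta_F(z)$ contributes a single simple pole (the last with residue $1/\log\varphi$ by Proposition~\ref{prop:navas1}). Hence, setting
\[
g(z):=z^{3}\,s^{-z}\,\Gamma(z)\,\zeta(1+z)\,\zeta_F(z),
\]
the function $g$ is holomorphic near $z=0$, and the residue sought is the coefficient of $z^{2}$ in its Taylor expansion there, equivalently $\tfrac12 g''(0)$. So the plan is just to expand, multiply, and read off that coefficient.

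Concretely, I would write $g(z)=\tfrac1{\log\varphi}\,s^{-z}\cdot\bigl(z\Gamma(z)\bigr)\cdot\bigl(z\zeta(1+z)\bigr)\cdot\bigl(\log\varphi\cdot z\zeta_F(z)\bigr)$, each factor being holomorphic and equal to $1$ at $z=0$. Their Taylor expansions through order $z^{2}$ are supplied by \eqref{eq:sto-z} for $s^{-z}$ (with $\log s$ as a parameter), by \eqref{eq:gamma_0} and \eqref{eq:zeta_0} for $z\Gamma(z)$ and $z\zeta(1+z)$, and by Lemma~\ref{lem:zetaF_0} for $z\zeta_F(z)$ --- the latter's order-$z$ coefficient being the one ingredient not already in \cite{N2001}. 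I would then multiply the four power series pairwise, keeping terms through $z^{2}$ and dropping the $O(z^{3})$ tails, and extract the $z^{2}$-coefficient. The $(\log s)^{2}$-term arises from the $z^{2}$-term of $s^{-z}$ paired against the three constants; the $\log s$-term is $-\log s$ times the $z$-coefficient of $z\Gamma(z)\cdot z\zeta(1+z)\cdot\log\varphi\,z\zeta_F(z)$, which collapses to $2\gamma+\tfrac{\log5-\log\varphi}{2}=c_3$; and the constant term is the $z^{2}$-coefficient of that same triple product.

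After dividing through by $\log\varphi$, the first two pieces become $\dfrac{(\log s)^{2}}{2\log\varphi}$ and $-\dfrac{c_3}{\log\varphi}\log s$, and the remaining constant is then reorganised into the stated $c_2$ by combining the $\gamma^{2}$- and $\gamma$-contributions from $\Gamma$ and $\zeta(1+z)$, the first Stieltjes constant $\gamma_1$, the $\pi^{2}/12$ from $\Gamma$, the $\log 5$-terms entering through $5^{z/2}$, and the series $c_1$ furnished by Lemma~\ref{lem:zetaF_0}.

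The whole computation is routine once Lemma~\ref{lem:zetaF_0} is available; the only delicate point is that a pole of order three requires each factor to be carried to relative order $z^{2}$, so that none of the $O(z^{3})$ errors leak into the $z^{2}$-coefficient, and the final consolidation of the many $\gamma$-, $\gamma_1$-, $\pi^{2}$-, $\log 5$-, $\log\varphi$- and $c_1$-contributions into the single constant $c_2$ is where a slip is most likely. I anticipate no conceptual obstacle beyond that bookkeeping.
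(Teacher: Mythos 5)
Your proposal is correct and takes essentially the same route as the paper: expand $s^{-z}$, $\Gamma(z)$ and $\zeta(1+z)$ to relative order $z^{2}$, invoke Lemma~\ref{lem:zetaF_0} for $\zeta_F(z)$, multiply the series, and read off the residue of the triple pole (the paper groups $s^{-z}\Gamma(z)\zeta(1+z)$ into a single double-pole Laurent series first, but that is only an organisational difference from your four-factor product). One remark: carrying out your bookkeeping literally yields the extra constants $\frac{\log\varphi-3\log 5}{12}+\frac{(\log 5)^2}{8\log\varphi}$ in the $z^{2}$-coefficient, which do appear in the paper's displayed residue formula but are missing from the stated $c_2$, so the consolidation step you flag as the likely place for a slip is precisely where the paper's own statement and proof disagree.
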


\begin{proof} Around $z=0$, using \eqref{eq:sto-z}, \eqref{eq:gamma_0} and \eqref{eq:zeta_0}, we have 
$$s^{-z} \Gamma(z) \zeta(1+z) = \frac{1}{z^2}+\frac{2\gamma-\log s}{z}+\left(\frac{(\gamma-\log s)(3\gamma-\log s)}{2}-\gamma_1+\frac{\pi^2}{12}\right)+O(z).$$ Thus, by Lemma \ref{lem:zetaF_0}, we have
\begin{multline*}\res{z}{0}{s^{-z} \Gamma(z) \zeta(1+z)\zeta_F(z)}=\frac{1}{\log\varphi}\left(\frac{(\gamma-\log s)(3\gamma-\log s)}{2}-\gamma_1+\frac{\pi^2}{12}\right)\\+\frac{\log 5 - \log\varphi}{2\log\varphi}(2\gamma-\log s)+\left(\frac{\log\varphi-3\log 5}{12} + \frac{(\log 5)^2}{8\log\varphi} + c_1 \right).\end{multline*} Gathering  powers of $\log s$ finishes the proof.
\end{proof}

The simple pole at $z=-1$ is a straightforward calculation, using known values.

\begin{proposition} We have $$\res{z}{-1}{s^{-z} \Gamma(z) \zeta(1+z) \zeta_F(z)}=-\frac{s}{2}.$$
\end{proposition}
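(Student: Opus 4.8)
The plan is to isolate which of the four factors in $s^{-z}\Gamma(z)\zeta(1+z)\zeta_F(z)$ is actually responsible for the pole at $z=-1$. First I would observe that $s^{-z}$ is entire, that $\zeta(1+z)$ is analytic at $z=-1$ (its only pole is the simple one at $z=0$), and that $\zeta_F(z)$ has no pole at $z=-1$: by Proposition \ref{prop:navas1} the poles of $\zeta_F$ lie at $s(n,k)=-2k+\tfrac{\pi i(2n+k)}{\log\varphi}$ with $k\in\mathbb{Z}_{\geqslant 0}$, and $s(n,k)=-1$ would force $2n+k=0$ and $k=\tfrac12$, which is impossible. Hence the only singularity of the integrand at $z=-1$ is the simple pole of $\Gamma(z)$, whose residue there is $(-1)^1/1!=-1$.

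Given this, the residue factors as
$$\res{z}{-1}{s^{-z}\Gamma(z)\zeta(1+z)\zeta_F(z)}=\Bigl(s^{-z}\,\zeta(1+z)\,\zeta_F(z)\Bigr)\Big|_{z=-1}\cdot\res{z}{-1}{\Gamma(z)}=s\cdot\zeta(0)\cdot\zeta_F(-1)\cdot(-1).$$
Then I would substitute the known values $\zeta(0)=-\tfrac12$ and $\zeta_F(-1)=-1$ (the latter being the last assertion of Proposition \ref{prop:navas1}), which yields $s\cdot(-\tfrac12)\cdot(-1)\cdot(-1)=-\tfrac{s}{2}$, as claimed.

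There is no real obstacle here: the statement follows from the product rule for residues at a simple pole together with two tabulated special values. The only points requiring care are checking that $\zeta_F$ contributes no additional pole at $z=-1$ (done above via Proposition \ref{prop:navas1}) and correctly tracking the three sign contributions from $\res{z}{-1}{\Gamma(z)}=-1$, $\zeta(0)=-\tfrac12$, and $\zeta_F(-1)=-1$.
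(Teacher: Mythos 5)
Your proposal is correct and follows essentially the same route as the paper: identify the pole at $z=-1$ as coming solely from $\Gamma(z)$, then multiply $\res{z}{-1}{\Gamma(z)}=-1$ by the values $s^{-z}|_{z=-1}=s$, $\zeta(0)=-\tfrac12$, and $\zeta_F(-1)=-1$. Your explicit check that $\zeta_F$ has no pole at $z=-1$ is a small but welcome addition the paper leaves implicit.
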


\begin{proof} We calculate, using that $\res{z}{-1}{\Gamma(z)}=-1$, $\zeta(0)=-1/2,$ and $\zeta_F(-1)=-1$ to give \begin{equation*}\res{z}{-1}{s^{-z} \Gamma(z) \zeta(1+z) \zeta_F(z)}=s\, \zeta(0)\, \zeta_F(-1)\cdot\res{z}{-1}{\Gamma(z)}=s\left(\frac{-1}{2}\right)(-1)(-1)=-\frac{s}{2}.\end{equation*} For the calculation of $\zeta_F(-1)$ see Navas \cite[Eq.~9]{N2001}
\end{proof}

For the double poles at $z\in-4\mathbb{N}$, we require the first two terms of the Laurent expansions of $\Gamma(z)$ and $\zeta_F(z)$ respectively around these points. For $\Gamma(z)$, the first term of the expansion is well known to be $\frac{1}{(4n)!}\cdot \frac{1}{z+4n}$. The constant term is surprisingly difficult to find in literature, but it can be easily calculated as the derivative of $(z+4n)\Gamma(z)$, evaluated at $z=-4n$. In order to accomplish this, we repeatedly apply the functional equation $z \Gamma(z) = \Gamma(z+1)$ to find that 
$$
\Gamma(z) = \frac{\Gamma(z+1)}{z} = \dots = \frac{\Gamma(z+4n+1)}{z(z+1) \cdots (z+4n)},
$$
so that we only need to evaluate the derivative
$$
\frac{d}{dz}\left\{\frac{\Gamma(z+4n+1)}{z(z+1) \cdots (z+4n-1)}\right\}
$$
at $z=-4n$. Recalling that $\Gamma(1) = 1$ and $\Gamma'(1) = -\gamma$, we  find that the constant term of the Laurent series of $\Gamma(z)$ around $z=-4n$ is
$$
\frac{d}{dz}\left\{\frac{\Gamma(z+4n+1)}{z(z+1) \cdots (z+4n-1)}\right\}\Big|_{z=-4n}=\frac{1}{(4n)!}\left(\sum_{k=1}^{4n} \frac{1}{k} - \gamma\right),
$$ so that near $z=-4n$, \begin{equation}\label{eq:Gamma4n}\Gamma(z)=\frac{1}{z+4n}\cdot\frac{1}{(4n)!}+ \frac{1}{(4n)!}\left(\sum_{k=1}^{4n} \frac{1}{k} - \gamma\right)+O(z+4n).\end{equation}
We also require the constant term of the Laurent series of $\zeta_F(z)$ around $z=-4n$. Following Navas~\cite{N2001},
\begin{align*}
	\zeta_F(z) &= 5^{z/2} \sum_{k=0}^\infty \binom{-z}{k} \frac{(-1)^k}{\varphi^{z+2k} + (-1)^{k+1}} = 5^{z/2} \sum_{k=0}^\infty \frac{\Gamma(1-z)}{\Gamma(1-z-k) k!}\cdot \frac{(-1)^k}{\varphi^{z+2k} + (-1)^{k+1}} \\
&= 5^{z/2}\frac{\Gamma(1-z)}{\Gamma(1-z-2n) (2n)!}\cdot \frac{1}{\varphi^{z+4n} -1}+5^{z/2} \sum_{\substack{k=0\\ k \neq 2n}}^\infty \frac{\Gamma(1-z)}{\Gamma(1-z-k) k!}\cdot \frac{(-1)^k}{\varphi^{z+2k} + (-1)^{k+1}}.
\end{align*}
The second term is holomorphic at $z=-4n$ and contributes to the constant term with its value, which we denote by $c_{-4n}$. For the first term, we note that 
$$
\lim_{z \rightarrow -4n} 5^{z/2} = 5^{-2n},
\qquad
\mbox{and}
\qquad
\lim_{z \rightarrow -4n}  \frac{\Gamma(1-z)}{\Gamma(1-z-2n) (2n)!}  =  \frac{(4n)!}{((2n)!)^2}.
$$
Thus, it remains to note that 
$$
\lim_{z \rightarrow -4n} \frac{d}{dz} \left\{\frac{z+4n}{\varphi^{z+4n} -1}\right\}=-\frac{1}{2},
$$
which is easily shown by differentiating and applying L'H\^opital's rule twice. Proposition \ref{prop:navas1} gives that the residue of $\zeta_F(z)$ at $z=-4n$ is 
$$
b_{-4n}:=\res{z}{-4n}{\zeta_F(z)} = \frac{5^{-2n}}{\log \varphi} \binom{4n}{2n} = \frac{5^{-2n}}{\log \varphi}\cdot \frac{(4n)!}{((2n)!)^2},
$$
so that near $z = -4n$, we have
\begin{equation}\label{eq:zetaF_-4n}
\zeta_F(z) = \frac{b_{-4n}}{z+4n} +\left( c_{-4n} - \frac{5^{-2n}}{2}\cdot  \frac{(4n)!}{((2n)!)^2}\right) + O(z+4n).
\end{equation}

\begin{proposition}\label{prop:g} We have
$$g(s):=\sum_{n=1}^\infty \res{z}{-4n}{s^{-z} \Gamma(z) \zeta(1+z) \zeta_F(z)}=\sum_{n=1}^\infty \alpha_n\, s^{4n}-\log s\sum_{n=1}^\infty\beta_n s^{4n},$$ where $\beta_n:={b_{-4n}\zeta(1-4n)}/{(4n)!}$, $$\alpha_n:=\frac{B_{2n}}{4n\cdot(4n)!}\left(b_{-4n}\left(\sum_{k=1}^{4n} \frac{1}{k} - \gamma\right) + c_{-4n} - \frac{5^{-2n}}{2}  \frac{(4n)!}{((2n)!)^2}\right)+\frac{b_{-4n}}{(4n)!}\cdot\zeta'(1-4n),$$ and $B_{2n}$ is the $2n$-th Bernoulli number. Moreover, as $s\to0^+$, we have $g(s)=O(s^2)$.
\end{proposition}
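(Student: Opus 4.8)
\emph{Computing each residue.} The plan is, for each $n\ge1$, to evaluate $\res{z}{-4n}{s^{-z}\Gamma(z)\zeta(1+z)\zeta_F(z)}$ by treating $z=-4n$ as a pole of order two, and then to estimate the resulting series. Near $z=-4n$: $s^{-z}=s^{4n}\bigl(1-(z+4n)\log s+O((z+4n)^2)\bigr)$ is holomorphic; $\Gamma(z)$ has the simple pole of \eqref{eq:Gamma4n}, with residue $1/(4n)!$ and constant term $\tfrac{1}{(4n)!}\bigl(\sum_{k=1}^{4n}\tfrac1k-\gamma\bigr)$; $\zeta_F(z)$ has the simple pole of \eqref{eq:zetaF_-4n}, with residue $b_{-4n}$ and constant term $c_{-4n}-\tfrac{5^{-2n}}{2}\tfrac{(4n)!}{((2n)!)^2}$; and $\zeta(1+z)$ is holomorphic at $-4n$, equal to $\zeta(1-4n)+\zeta'(1-4n)(z+4n)+O((z+4n)^2)$, where $\zeta(1-4n)$ is an elementary multiple of a Bernoulli number. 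Only $\Gamma$ and $\zeta_F$ are singular, so writing $\Gamma(z)\zeta(1+z)\zeta_F(z)=\tfrac{A_n}{(z+4n)^2}+\tfrac{C_n}{z+4n}+O(1)$ one has $A_n=\tfrac{b_{-4n}}{(4n)!}\zeta(1-4n)$ and $C_n$ equal to the sum of $(\Gamma\text{-const})\cdot(\zeta_F\text{-res})\cdot\zeta(1-4n)$, $(\Gamma\text{-res})\cdot(\zeta_F\text{-const})\cdot\zeta(1-4n)$ and $(\Gamma\text{-res})\cdot(\zeta_F\text{-res})\cdot\zeta'(1-4n)$. Multiplying by $s^{-z}$ and keeping the $(z+4n)^{-1}$ coefficient gives
$$
\res{z}{-4n}{s^{-z}\Gamma(z)\zeta(1+z)\zeta_F(z)}=s^{4n}\bigl(C_n-A_n\log s\bigr),
$$
the term $-A_n\log s$ coming from the product of $-(z+4n)\log s$ with the order-two part. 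Inserting the residues and constant terms above, together with the classical value of $\zeta(1-4n)$, identifies $C_n$ with $\alpha_n$ and $A_n$ with $\beta_n$; summing over $n\ge1$ gives the stated formula for $g(s)$, subject to the convergence handled next.

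\emph{The bound $g(s)=O(s^2)$.} I would first prove $\sum_{n\ge1}\bigl(|\alpha_n|+|\beta_n|\bigr)<\infty$. Granting this, for $0<s\le1$ we have $s^{4n}\le s^4$ for all $n\ge1$, whence
$$
|g(s)|\le\sum_{n\ge1}\bigl(|\alpha_n|+|\beta_n|\,|\log s|\bigr)s^{4n}\le s^4\bigl(1+\log(1/s)\bigr)\sum_{n\ge1}\bigl(|\alpha_n|+|\beta_n|\bigr)=O\bigl(s^4\log(1/s)\bigr)=O(s^2).
$$
For the summability I would combine: $b_{-4n}=\tfrac{5^{-2n}}{\log\varphi}\binom{4n}{2n}$ and $\tfrac{5^{-2n}}{2}\tfrac{(4n)!}{((2n)!)^2}$ are both $O((16/25)^n)$, since $\binom{4n}{2n}\le 2^{4n}$; the harmonic factor $\sum_{k=1}^{4n}\tfrac1k-\gamma=O(\log n)$; at $z=-4n$ the series defining $c_{-4n}$ reduces to the finite sum $5^{-2n}\sum_{0\le k\le 4n,\,k\ne 2n}\binom{4n}{k}\tfrac{(-1)^k}{\varphi^{-4n+2k}+(-1)^{k+1}}$, whose denominators are bounded below in modulus by an absolute constant for all such $n,k$, so $|c_{-4n}|\le 5^{-2n}\cdot 2^{4n}\cdot O(1)=O((16/25)^n)$; and the functional equation gives that $|\zeta(1-4n)|$ and $|\zeta'(1-4n)|$ are $O\bigl((4n)!\,(2\pi)^{-4n}n^{C}\bigr)$ for some absolute $C$. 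Since $\alpha_n$ and $\beta_n$ each carry one factor $1/(4n)!$ (from $\res{z}{-4n}{\Gamma(z)}$), the factorials cancel and every summand is bounded by $D^n(2\pi)^{-4n}n^{O(1)}$ with $D$ absolute; this is summable with plenty of room, and it also shows that the two series defining $g$ are entire in $s$, so the rearrangements above are justified.

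\emph{Where the difficulty lies.} The residue bookkeeping is mechanical once \eqref{eq:Gamma4n}, \eqref{eq:zetaF_-4n} and the expansions of $\zeta(1+z)$ and $s^{-z}$ are in hand; the one genuinely delicate step is the uniform-in-$n$ control of $c_{-4n}$. For that I would verify that $\varphi^{-4n+2k}+(-1)^{k+1}$ vanishes only at $k=2n$ — since $\varphi^t=1$ forces $t=0$, and there the sign then makes it vanish, which is precisely why that index must be separated off when $\zeta_F$ is expanded — and that for $k\ne 2n$ it stays at least $1-\varphi^{-4}$ in modulus, uniformly in $n\ge1$. This turns the bound on $c_{-4n}$ into the trivial estimate $\sum_k\binom{4n}{k}=2^{4n}$ and yields the exponential decay which, against the factorials coming from $\zeta(1-4n)$ and $\zeta'(1-4n)$, makes the whole series $g(s)$ harmless.
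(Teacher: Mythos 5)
Your proof is correct and follows essentially the same route as the paper: the double-pole residue bookkeeping via \eqref{eq:Gamma4n}, \eqref{eq:zetaF_-4n} and $\zeta(1-4n)=B_{2n}/4n$ is identical, and your uniform control of $c_{-4n}$ (lower-bounding the denominators by $1-\varphi^{-4}$ and using $\sum_k\binom{4n}{k}=2^{4n}$) together with the functional-equation estimates for $\zeta(1-4n)$ and $\zeta'(1-4n)$ mirrors the paper's argument, which instead bounds $c_{-4n}$ via the exact identity $\sum_k\binom{4n}{k}\varphi^{4n-2k}=5^{2n}$ and settles for $|\alpha_n|,|\beta_n|=O(1)$ rather than summability before invoking the geometric decay of $s^{4n}$. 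The only (immaterial) overstatement is the claim that the series defining $g$ are entire: your bounds give a finite though large radius of convergence, which is all that is needed as $s\to0^+$.
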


\begin{proof} The first part of the proposition follows from \eqref{eq:Gamma4n}, \eqref{eq:zetaF_-4n}, and the expansions, around $z=-4n$, $$s^{-z}=s^{4n}s^{-(z+4n)}=s^{4n}-(z+4n)\log s+O((z+4n)^2),$$ and $\zeta(1+z)=\zeta(1-4n)+\zeta'(1-4n)(z+4n)+O((z+4n)^2)$ along with the fact that $\zeta(1-4n)=B_{2n}/4n$; see Titchmarch \cite[p.~19]{T1986}.

For the second part, we start by using that $$\zeta'(1-4n)=\frac{(-1)^{k+1}(4n)!}{2n(2\pi)^{2n}}\zeta'(4n)+\frac{B_{4n}}{4n}\left(\sum_{k=1}^{4n-1}\frac{1}{k}-\gamma-\log(2\pi)\right).$$ Using the facts that $\zeta'(4n)\sim -2^{-4n}\log 2$ and $|B_{4n}|\sim\tfrac{2(4n)!}{(2\pi)^{4n}}$ with Stirling's approximation of the factorial, we have that $$\frac{b_{-4n}}{(4n)!}\cdot\zeta'(1-4n)=O\left(\frac{(4n)!}{(40\pi)^{2n}n((2n)!)^2}\right)
=O\left(\frac{1}{(10\pi)^{2n}n^{3/2}}\right)=O(1).$$ Now, we have at hand asymptotic information about all of the quantities in $\alpha_n$ except for $c_{-4n}$. It turns out that $c_{-4n}$ is uniformly bounded; more precisely $|c_{-4n}|<3$. To see this, note that for all $n\geqslant 1$, we have
\begin{equation*}
|c_{-4n}|=\left|\frac{1}{5^{2n}} \sum_{\substack{k=0\\ k \neq 2n}}^\infty {4n\choose k}\cdot \frac{(-1)^k}{\varphi^{-4n+2k} + (-1)^{k+1}}\right|
=\frac{1}{5^{2n}} \sum_{\substack{k=0\\ k \neq 2n}}^{4n} {4n\choose k}\cdot \varphi^{4n-2k}\left|\frac{1}{1-\varphi^{4n-2k}}\right|
\end{equation*} since ${4n\choose k}=0$ for $k>4n$. Now the value of $\left|\tfrac{1}{1-\varphi^{4n-2k}}\right|$ is maximal when $k=2n-1$ (recall, $k\neq 2n$), and there, it is approximately $2.6180$, which yields
\begin{equation*}
|c_{-4n}|<\frac{3}{5^{2n}} \sum_{\substack{k=0\\ k \neq 2n}}^{4n} {4n\choose k}\cdot \varphi^{4n-2k}\leqslant\frac{3}{5^{2n}} \sum_{\substack{k=0}}^{4n} {4n\choose k}\cdot \varphi^{4n-k}\left(\frac{1}{\varphi}\right)^{k}=\frac{3}{5^{2n}}\left(\varphi+\frac{1}{\varphi}\right)^{4n}=3,
\end{equation*} which shows that $c_{-4n}$ is uniformly bounded. With this in hand, we will now use the fact that $|B_{2n}|=\tfrac{2(2n)!}{(2\pi)^{2n}}\zeta(2n)$ and $\zeta(2n)\sim 1$ as $n\to\infty$, to finish our proof. To this end, using the definition of $b_{-4n}$ and Stirling's approximation, we have
\begin{align*}
|\alpha_n|&<\frac{|B_{2n}|}{4n\cdot(4n)!}\left(\frac{5^{-2n}}{\log \varphi}\cdot \frac{(4n)!}{((2n)!)^2}\left(\sum_{k=1}^{4n} \frac{1}{k} - \gamma\right) + 3 + \frac{5^{-2n}}{2}  \frac{(4n)!}{((2n)!)^2}\right)+O(1)\\
&=\zeta(2n)\frac{2(2n)!}{4n\cdot(2\pi)^{2n}\cdot(4n)!}\left(\frac{5^{-2n}}{\log \varphi}\cdot \frac{(4n)!}{((2n)!)^2}\left(\sum_{k=1}^{4n} \frac{1}{k} - \gamma\right) + 3 + \frac{5^{-2n}}{2}  \frac{(4n)!}{((2n)!)^2}\right)+O(1)\\
&\sim\frac{2(2n)!}{4n\cdot(2\pi)^{2n}\cdot(4n)!}\left(\frac{5^{-2n}}{\log \varphi}\cdot \frac{(4n)!}{((2n)!)^2}\log(4n) + 3 + \frac{5^{-2n}}{2}  \frac{(4n)!}{((2n)!)^2}\right)+O(1)\\
&=\frac{2}{4n\cdot(2\pi)^{2n}}\left(\frac{5^{-2n}}{\log \varphi}\cdot \frac{\log(4n)}{(2n)!} + \frac{3\cdot(2n)!}{(4n)!} + \frac{5^{-2n}}{2(2n)!}\right)+O(1)\\
&\sim \frac{2}{4n\cdot(2\pi)^{2n}}\cdot\frac{5^{-2n}}{\log \varphi}\cdot \frac{\log(4n)}{(2n)!}+O(1)\sim \frac{1}{4\sqrt{\pi}\log{\varphi}}\cdot\frac{\log n}{\sqrt{n}}\cdot\left(\frac{e}{20\pi n}\right)^{2n}+O(1)=O(1).
\end{align*} Similarly, we have that $$|\beta_n|\sim\frac{1}{4\sqrt{\pi}\log\varphi}\cdot\frac{1}{\sqrt{n}}\cdot\left(\frac{e}{20\pi n}\right)^{2n}=O(1).$$

To see $g(s)=O(s^2)$ as $s\to0^+$, we note that, using L'H\^opital's rule, we have $\lim_{s\to0^+}s^2\log s=0$. Using this, along with an application of the first part of the proposition, \begin{equation*}g(s)=\left(\alpha_1 s^4-\beta_1 s^4\log s\right)(1+O(s^4))=O(s^2).\qedhere\end{equation*}  
\end{proof}

Finally, at the simple poles of $\zeta_F(z)$ off the real line, which occur at $z=s(n,k)$ with $k\geqslant 0$ and $n\neq -k/2$, we note that $s^{-z} \Gamma(z) \zeta(1+z)$ is analytic at these points. Concerning the contributions from these singularities, we have the following result.

\begin{proposition}\label{prop:h} We have
\begin{align*} h(s):=&\, \sum_{k=0}^\infty\sum_{\substack{n=-\infty\\ n\neq -k/2}}^\infty \res{z}{s(n,k)}{s^{-z} \Gamma(z) \zeta(1+z) \zeta_F(z)}\\
=&\, \frac{1}{\log\varphi}\sum_{k=0}^\infty\sum_{\substack{n=-\infty\\ n\neq -k/2}}^\infty (-1)^k{-s(n,k)\choose k} \left(\frac{s}{\sqrt{5}}\right)^{-s(n,k)}\Gamma(s(n,k))\, \zeta(1+s(n,k)).\end{align*} Moreover, $h(s)=h(\varphi s)+O(s^2)$ as $s\to0^+$. 
\end{proposition}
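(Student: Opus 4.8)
The plan is to handle the two claims of Proposition~\ref{prop:h} in turn. For the explicit formula, the key point is that at every pole $z=s(n,k)$ with $k\geqslant 0$ and $n\neq -k/2$ the factors $s^{-z}$, $\Gamma(z)$ and $\zeta(1+z)$ are \emph{all} holomorphic: the sole pole of $\zeta(1+z)$ is at $z=0=s(0,0)$, which is excluded, and since $s(n,k)$ has nonzero imaginary part precisely when $n\neq -k/2$, it misses the poles of $\Gamma$ on the non-positive integers. So the integrand has there a simple pole, inherited from $\zeta_F$, with
\[
\res{z}{s(n,k)}{s^{-z}\Gamma(z)\zeta(1+z)\zeta_F(z)}=s^{-s(n,k)}\Gamma(s(n,k))\,\zeta(1+s(n,k))\cdot\res{z}{s(n,k)}{\zeta_F(z)},
\]
and Proposition~\ref{prop:navas1} evaluates the last residue as $(-1)^k5^{s(n,k)/2}\binom{-s(n,k)}{k}/\log\varphi$. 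Absorbing the $5^{z/2}$ into $s^{-z}$ through $s^{-z}5^{z/2}=(s/\sqrt5)^{-z}$ and summing over all admissible pairs $(n,k)$ gives the claimed expression for $h(s)$. What needs checking is that this double series converges (and, just as in the discussion preceding Theorem~\ref{thm:Fform}, that the limiting contour integral really equals the sum of all residues): this comes from Stirling's asymptotics for $\Gamma$ on vertical lines, which forces the factor $\Gamma(s(n,k))$ to decay like $e^{-\pi|\Im s(n,k)|/2}$, beating the at-most-polynomial growth in $n$ and $k$ of $\zeta(1+s(n,k))$ and of the degree-$k$ polynomial $\binom{-s(n,k)}{k}$.

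For the quasi-periodicity statement, split $h(s)=h_0(s)+h_{\geqslant 1}(s)$ according to $k=0$ versus $k\geqslant 1$. In the $k=0$ part we have $s(n,0)=2\pi i n/\log\varphi$, so $\Re s(n,0)=0$, the factor $(s/\sqrt5)^{-s(n,0)}$ has modulus $1$, and --- this is the crucial observation --- passing from $s$ to $\varphi s$ multiplies it by $\varphi^{-s(n,0)}=\varphi^{-2\pi i n/\log\varphi}=e^{-2\pi i n}=1$, whereas $\Gamma(s(n,0))$ and $\zeta(1+s(n,0))$ do not involve $s$ at all. Hence $h_0(\varphi s)=h_0(s)$ exactly. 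In the $k\geqslant 1$ part, $\Re s(n,k)=-2k\leqslant -2$, so $|(s/\sqrt5)^{-s(n,k)}|=(s/\sqrt5)^{2k}\leqslant (s/\sqrt5)^2$ once $0<s<\sqrt5$; feeding this into the Stirling bound above, the $n$-sum is finite and grows only mildly in $k$, so $\sum_{k\geqslant 1}$ converges for small $s$ and $h_{\geqslant 1}(s)=O(s^2)$ as $s\to 0^+$; the same estimate applied with $\varphi s$ in place of $s$ gives $h_{\geqslant 1}(\varphi s)=O(s^2)$ as well, since $\varphi s\to 0^+$. Combining, $h(\varphi s)=h_0(\varphi s)+h_{\geqslant 1}(\varphi s)=h_0(s)+O(s^2)=h(s)-h_{\geqslant 1}(s)+O(s^2)=h(s)+O(s^2)$, which is the asserted identity.

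Conceptually everything rests on the clean identity $\varphi^{-2\pi i n/\log\varphi}=1$, which makes the $k=0$ slice of the residue sum exactly $\log$-periodic, together with the trivial fact that each $k\geqslant 1$ slice carries an extra $(s/\sqrt5)^{2k}$ and is therefore negligible. The one place that requires genuine care --- and the step I expect to be the main obstacle --- is quantifying ``grows only mildly in $k$'': one must bound $\sum_{k\geqslant 1}(s/\sqrt5)^{2k}\sum_{n}\bigl|\binom{-s(n,k)}{k}\,\Gamma(s(n,k))\,\zeta(1+s(n,k))\bigr|$ uniformly for small $s$, checking in particular that the inner $n$-sum --- where $\binom{-s(n,k)}{k}$ can be as large as a constant multiple of $|n|^k$ --- is tamed by the $e^{-\pi|\Im s(n,k)|/2}$ decay coming from $\Gamma$; once this is made precise (the $n$-sum behaves like $\Gamma(k+1)$ divided by a geometric factor, so the resulting $k$-sum is itself geometric), the $O(s^2)$ bounds and with them the whole proposition follow.
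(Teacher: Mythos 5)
Your argument is correct and follows essentially the same route as the paper: the residue formula comes from multiplying the residue of $\zeta_F$ from Proposition~\ref{prop:navas1} by the holomorphic factors (absorbing $5^{z/2}$ into $s^{-z}$), the $k=0$ slice is exactly invariant under $s\mapsto\varphi s$ because $\varphi^{-2\pi i n/\log\varphi}=1$, and the $k\geqslant 1$ slices carry the factor $(s/\sqrt5)^{2k}$ and contribute $O(s^2)$. The quantitative step you flag as the main obstacle is handled in the paper exactly as you sketch it: the inner $n$-sum of $|n|^k$ against the exponential decay of $\Gamma$ is evaluated via $\sum_{n\geqslant 0}n^kx^n=xA_k(x)(1-x)^{-(k+1)}$ with $A_k(1)=k!$ cancelling the $1/k!$ from the binomial coefficient, leaving a geometric series in $k$.
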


\begin{proof} 
	The form $h(s)$ of these contributions is immediate using Proposition \ref{prop:navas1}. Also, since the terms in each sum over $n$ is symmetric about the real axis, $h(s)$ is real. It remains to examine the analytic properties of $h(s)$ as a function of $s$. 
	
	We first consider the $k=0$ term of $h(s)$, which we denote by $[k=0]h(s)$.
	Noting that
	\begin{align*}
		\left(\frac{\varphi s}{\sqrt{5}}\right)^{\frac{i\pi(2n)}{\log\varphi}}
		&=\cos\left(\frac{2\pi n}{\log\varphi}\log\left(\frac{\varphi \alpha}{\sqrt{5}}\right)\right)+i\sin\left(\frac{2\pi n}{\log\varphi}\log\left(\frac{\varphi \alpha}{\sqrt{5}}\right)\right)\\
		&=\cos\left(2\pi n+\frac{2\pi n}{\log\varphi}\log\left(\frac{\alpha}{\sqrt{5}}\right)\right)+i\sin\left(2\pi n+\frac{2\pi n}{\log\varphi}\log\left(\frac{\alpha}{\sqrt{5}}\right)\right)\\
		&=\cos\left(\frac{2\pi n}{\log\varphi}\log\left(\frac{\alpha}{\sqrt{5}}\right)\right)+i\sin\left(\frac{2\pi n}{\log\varphi}\log\left(\frac{\alpha}{\sqrt{5}}\right)\right)=\left(\frac{s}{\sqrt{5}}\right)^{\frac{i\pi(2n)}{\log\varphi}},
	\end{align*}
	we have that $[k=0]h(\varphi s) = [k=0]h(s)$.
	
	For $k\neq 0$, using the functional equations for $\zeta$ and $\Gamma$, since we are  examining complex values, there is a positive constant $d_{n,k}$ that is uniformly bounded such that
	\begin{align*}\big|\Gamma(s(n,k))\zeta(1+s(n,k))\big|&=\left|\frac{\zeta(-s(n,k))2^{s(n,k)}\pi^{s(n,k)+1}}{s(n,k)\sin\left(-\frac{s(n,k)\pi}{2}\right)}\right|\\
		&=\left|\frac{\pi\cdot\zeta\big(2k-i\tfrac{\pi(2n+k)}{\log\varphi}\big)}{\left(-2k+i\tfrac{\pi(2n+k)}{\log\varphi}\right)(2\pi)^{2k}\sin\left(i\tfrac{\pi^2(2n+k)}{2\log\varphi}\right)}\right|\sim d_{n,k}\cdot \frac{e^{-\frac{\pi^2k}{2\log\varphi}}}{(2\pi)^{2k}}\cdot e^{-\frac{\pi^2|n|}{\log\varphi}},
	\end{align*} where we have used that $|\sin(z)|$ is $\pi$-periodic in the $\Re(z)$, that $|e^{i\theta}|=1$ for all $\theta$, and that, as $k$ or $|n|$ (or both) grows, $\left|\sin\big(i\tfrac{\pi^2(2n+k)}{2\log\varphi}\big)\right|\sim {e^{\frac{\pi^2k}{2\log\varphi}}e^{\frac{\pi^2|n|}{\log\varphi}}}/{2}.$ 
	
	It remains to deal with the factor ${-s(n,k)\choose k}$. To this end, note that \begin{multline*}\left|{-s(n,k)\choose k}\right|=\left|\frac{\Gamma(1-s(n,k))}{k!\, \Gamma(1-s(n,k)-k)}\right|=\frac{1}{k!}\left(\prod_{j=k+1}^{2k}\left(j^2+\big(\tfrac{\pi(2n+k)}{\log\varphi}\big)^2\right)\right)^{1/2}\\
		= \frac{(kn)^{k}}{k!}\left(\prod_{j=k+1}^{2k}\left(\left(\frac{j}{nk}\right)^2+\left(\tfrac{\pi\left(\frac{2}{k}+\frac{1}{n}\right)}{\log\varphi}\right)^2\right)\right)^{1/2}\leqslant \frac{(kn)^{k}}{k!}\left(\left(\frac{2}{n}\right)^2+\left(\tfrac{\pi\left(\frac{2}{k}+\frac{1}{n}\right)}{\log\varphi}\right)^2\right)^{k/2},\end{multline*} and, independent of $k$,
	$$\left(\left(\frac{2}{n}\right)^2+\left(\tfrac{\pi\left(\frac{2}{k}+\frac{1}{n}\right)}{\log\varphi}\right)^2\right)^{k/2}\xrightarrow{n\to\infty}\left(\frac{2\pi}{k\log\varphi}\right)^{k}.$$ Thus, there is a $d>0$, independent from $n$ and $k$, such that any term of $h(s)$ with $k\neq 0$, satisfies, \begin{align*}\big|[k\neq 0]h(s)\big|&\leqslant d\sum_{\substack{n=-\infty\\ n\neq -k/2}}^\infty \left(\frac{s}{\sqrt{5}}\right)^{2k}\frac{(kn)^{k}}{k!}\left(\frac{2\pi}{k\log\varphi}\right)^{k}\cdot d_{n,k}\cdot \frac{e^{-\frac{\pi^2k}{2\log\varphi}}}{(2\pi)^{2k}}\cdot e^{-\frac{\pi^2|n|}{\log\varphi}}\\
		&= d\left(\frac{s}{\sqrt{5}}\right)^{2k}\frac{1}{k!}\left(\frac{1}{\log\varphi}\right)^{k}\frac{e^{-\frac{\pi^2k}{2\log\varphi}}}{(2\pi)^{k}}\sum_{\substack{n=-\infty\\ n\neq -k/2}}^\infty d_{n,k}\cdot n^{k}\cdot  e^{-\frac{\pi^2|n|}{\log\varphi}},
	\end{align*} where $d_{n,k}$ here is different from above, but still a uniformly bounded positive constant. So, there is a positive constant $d$, also different from above, but still independent of $n$ and $k$, such that 
	\begin{align*}
		\nonumber\big|[k\neq 0]h(s)\big|&\leqslant d\left(\frac{s^2}{10\pi\cdot\log\varphi\cdot e^{\frac{\pi^2}{2\log\varphi}}}\right)^{k}\frac{1}{k!} \sum_{n=0}^\infty n^{k}  e^{-\frac{\pi^2|n|}{\log\varphi}}\\
		\nonumber&=d\left(\frac{s^2}{10\pi\cdot\log\varphi\cdot e^{\frac{\pi^2}{2\log\varphi}}}\right)^{k}\frac{1}{k!}\cdot\frac{e^{-\frac{\pi^2}{2\log\varphi}} A_{k}\big(e^{-\frac{\pi^2}{2\log\varphi}}\big)}{\big(1-e^{-\frac{\pi^2}{2\log\varphi}}\big)^{k+1}}\\
		\nonumber&=d\left(\frac{s^2}{10\pi\cdot\log\varphi\cdot \big(e^{\frac{\pi^2}{2\log\varphi}}-1\big)}\right)^{k}\frac{1}{k!}\cdot\frac{A_{k}\big(e^{-\frac{\pi^2}{2\log\varphi}}\big)}{\big(e^{\frac{\pi^2}{2\log\varphi}}-1\big)}\\
		&<\frac{d}{\big(e^{\frac{\pi^2}{2\log\varphi}}-1\big)}\left(\frac{s^2}{10\pi\cdot\log\varphi\cdot \big(e^{\frac{\pi^2}{2\log\varphi}}-1\big)}\right)^{k},
	\end{align*} where, for $k\geqslant 1$, we have used that $\sum_{n\geqslant 0}n^kx^n={x\, A_k(x)}{(1-x)^{-(k+1)}},$ where $A_k(x)\in\mathbb{Z}_{>0}[x]$ is the $k$-th Eulerian Polynomial, which satisfies $\deg A_k(x)=k-1$ and $A_k(1)=k!$. 
	Hence, the terms $[k\neq 0]h(s)$ contribute $O(s^{2})$ collectively, and the lemma follows.\qedhere
\end{proof}
\begin{lemma}\label{lem:1-es} As $s\to0^+$, we have $$\log(1-e^{-s})=\log s-2\gamma-\frac{s}{2}+O(s^2).$$
\end{lemma}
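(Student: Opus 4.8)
The plan is to feed the series $-\log(1-e^{-s})=\sum_{m\geqslant 1}\tfrac1m e^{-ms}$---the Taylor expansion of the logarithm, valid for $s>0$---through exactly the Mellin-transform machinery of \eqref{eq:asymp1}--\eqref{eq:asymp2}, only with the trivial Dirichlet series in place of $\zeta_F$. Concretely, insert Mellin's formula $e^{-w}=\tfrac{1}{2\pi i}\int_{a-i\infty}^{a+i\infty}\Gamma(z)w^{-z}\,dz$ with $w=ms$, interchange summation and integration by absolute convergence, and recognise $\sum_{m\geqslant 1}m^{-1-z}=\zeta(1+z)$ for $\Re(z)=a>0$, which gives
$$-\log(1-e^{-s})=\frac{1}{2\pi i}\int_{a-i\infty}^{a+i\infty}s^{-z}\,\Gamma(z)\,\zeta(1+z)\,dz,\qquad a>0.$$

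Next I would push the contour towards $-\infty$ exactly as in the paragraph following \eqref{eq:asymp2}: the functional equation of $\zeta$ and the standard decay of $\Gamma$ make the vertical integrals on the lines $\Re(z)=-n+\tfrac12$ vanish in the limit, the horizontal segments contribute nothing, and the integral collapses to the sum of the residues of $s^{-z}\Gamma(z)\zeta(1+z)$. The poles are: a double pole at $z=0$, where the simple poles of $\Gamma$ and of $\zeta(1+\cdot)$ collide; a simple pole at $z=-1$ from $\Gamma$, with $\zeta(0)=-\tfrac12$ finite; and simple poles at $z=-2n$ for $n\geqslant 1$ from $\Gamma$. The poles of $\Gamma$ at the odd negative integers are killed by the trivial zeros $\zeta(1-(2n+1))=\zeta(-2n)=0$, exactly as the trivial zeros of $\zeta(z+1)$ cancelled poles of $\Gamma$ in the main argument.

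It then remains to read off the three residues. Multiplying the expansions \eqref{eq:sto-z}, \eqref{eq:gamma_0} and \eqref{eq:zeta_0} gives, near $z=0$, $s^{-z}\Gamma(z)\zeta(1+z)=\tfrac{1}{z^2}+\tfrac{2\gamma-\log s}{z}+O(1)$, so $\res{z}{0}{s^{-z}\Gamma(z)\zeta(1+z)}=2\gamma-\log s$. At $z=-1$, using $\res{z}{-1}{\Gamma(z)}=-1$ and $\zeta(0)=-\tfrac12$, the residue is $s\cdot(-\tfrac12)\cdot(-1)=\tfrac s2$. At $z=-2n$, using $\res{z}{-2n}{\Gamma(z)}=\tfrac{1}{(2n)!}$ and $\zeta(1-2n)=-\tfrac{B_{2n}}{2n}$, the residue is $-\tfrac{B_{2n}}{2n\,(2n)!}\,s^{2n}$; since $|B_{2n}|\sim\tfrac{2(2n)!}{(2\pi)^{2n}}$---the Bernoulli estimate already used in the proof of Proposition \ref{prop:g}---these terms are summable for small $s$ and contribute $\sum_{n\geqslant 1}O\big(n^{-1}(s/2\pi)^{2n}\big)=O(s^2)$. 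Adding the residues, $-\log(1-e^{-s})=(2\gamma-\log s)+\tfrac s2+O(s^2)=2\gamma-\log s+O(s)$, and negating yields the lemma.

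The only genuinely non-trivial step is the justification of the contour shift---the vanishing of the far-left vertical integral, the irrelevance of the horizontal pieces, and the absence of further poles---but this is word for word the argument already carried out for the integral in \eqref{eq:asymp2}, so it may simply be invoked; every other step is a one-line Laurent-expansion or residue computation.
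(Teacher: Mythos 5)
Your proposal reproduces the paper's own argument essentially verbatim, only with more detail: the Mellin representation $-\log(1-e^{-s})=\frac{1}{2\pi i}\int_{a-i\infty}^{a+i\infty}s^{-z}\Gamma(z)\zeta(1+z)\,dz$, the contour shift to the left, and the residue bookkeeping at $z=0$, $z=-1$ and $z=-2n$ are exactly the steps the paper's (terser) proof invokes. So there is no difference of method to discuss.

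There is, however, a genuine error, which you have inherited from the paper's displayed expansion \eqref{eq:gamma_0}. The Laurent expansion of $\Gamma$ at the origin is
$$\Gamma(z)=\frac{\Gamma(1+z)}{z}=\frac{1}{z}-\gamma+\tfrac{1}{2}\left(\gamma^2+\frac{\pi^2}{6}\right)z+O(z^2),$$
since $\Gamma'(1)=-\gamma$: the constant term is $-\gamma$, not $+\gamma$. With the correct sign, the $1/z$-coefficient of $\Gamma(z)\zeta(1+z)$ is $-\gamma+\gamma=0$, so the residue of $s^{-z}\Gamma(z)\zeta(1+z)$ at the double pole $z=0$ is $-\log s$, not $2\gamma-\log s$. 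Hence the step ``$\res{z}{0}{s^{-z}\Gamma(z)\zeta(1+z)}=2\gamma-\log s$'' fails, and the constant $-2\gamma$ in the lemma is spurious: the statement you are asked to prove is in fact false as written. This is confirmed by the elementary expansion $1-e^{-s}=s\left(1-\tfrac{s}{2}+O(s^2)\right)$, which gives $\log(1-e^{-s})=\log s-\tfrac{s}{2}+O(s^2)=\log s+O(s)$ with no constant term, incompatible with $\log s-2\gamma+O(s)$ since $\gamma\neq 0$. Your remaining residue computations (the $\tfrac{s}{2}$ at $z=-1$ and the $O(s^2)$ contribution from $z=-2n$, $n\geqslant 1$) are correct and consistent with this corrected expansion; a proof along your lines goes through once \eqref{eq:gamma_0} is fixed, but it then proves the corrected statement $\log(1-e^{-s})=\log s+O(s)$.
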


\begin{proof} We follow the method above, writing $$-\log(1-e^{-s})=\frac{1}{2\pi i}\int_{a-i\infty}^{a+i\infty} s^{-z}\Gamma(z)\zeta(1+z)dz,$$ where, for now, $a>0$. We use the expansions of $\Gamma(z)$ and $\zeta(1+z)$ around $z=0$ from above, with the fact that the integrand has a simple pole at $z=-1$ coming from $\Gamma(z)$ to get that, as $s\to 0^+$, $$-\log(1-e^{-s})=-\log s+2\gamma-s\zeta(0)+O(s^2)$$ which, since $\zeta(0)=-1/2$, when multiplied by $-1$, yields the desired result.
\end{proof}

Using the relationship in \eqref{eq:FtoF2} and combining this lemma with the previous four propositions proves Theorem \ref{thm:Fform}.

\section{Fibonacci partitions $p_F(n)$ via the saddle point method}\label{sec:p}

In this section, we prove our main result using a saddle point method. To achieve this, we must determine the behaviour as $s\to0^+$ of each of the pieces in the expansion of $\log F(e^{-s})$. 

We begin in the same way as Hardy and Ramanujan \cite{HR1918}, using Cauchy's integral formula, but diverge from their argument almost immediately. We have  $$p_F(n)=\frac{1}{2\pi i}\int_{C_\varepsilon} \frac{F_2(z)}{z^{m+1}}dz=\frac{1}{2\pi i}\int_{C_\varepsilon} \frac{e^{n(-\log z+\frac{1}{n}\log F_2(z))}}{z}dz=\frac{1}{2\pi i}\int_{s.p.}e^{n(s+\frac{1}{n}\log F_2(e^{-s}))}ds,$$ where $C_\varepsilon$ indicates a positively oriented circle of radius $\varepsilon\in(0,1)$ and  $s.p.$ indicates a path that goes through the saddle point  $s=\alpha$ of the integrand, that is, the point $s=\alpha$ which is the solution of the equation \begin{equation}\label{eq:saddle}\frac{d}{ds}\left(s+\frac{1}{n}\log F_2(e^{-s})\right)=0.\end{equation} Our main result on Fibonacci partitions will follow from a result of Coons and Kirsten \cite{CK2009}, which itself was inspired by the work of Nanda \cite{N1954} and Richmond \cite{R1975,R1975/76}. 

\begin{theorem}[Coons and Kirsten, 2009]\label{thm:CK2009}
If $\Lambda(x)=\prod_{k\geqslant 1}\left(1- x^{\lambda_k}\right)^{- 1}$ generates a sequence $p_\lambda(n)$, and $s=\alpha$ is the solution of \eqref{eq:saddle} with $F_2(e^{-s})$ replaced by $\Lambda(e^{-s})$, then, as $n$ tends to infinity,  $$p_\lambda(n)=\frac{e^{n\alpha}\Lambda(e^{-\alpha})}{\sqrt{2\pi}}\left(\sqrt{\frac{1}{-\left.\frac{dn}{ds}\right|_{s=\alpha}}}+O\left(\frac{1}{n^{3/2}}\right)\right).$$ Here, $\alpha$ must be thought of as being replaced by its large-$n$ asymptotic expansion so that the asymptotic of $p_\lambda(n)$ represents a large-$n$ asymptotic.
\end{theorem}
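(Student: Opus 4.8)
The plan is to recover $p_\lambda(n)$ from Cauchy's coefficient formula and evaluate the resulting contour integral by the saddle point (Laplace) method, exactly along the line of computation displayed just before the statement; here $(\lambda_k)$ is the strictly increasing sequence of positive integers for which $\Lambda(x)=\prod_{k\geqslant1}(1-x^{\lambda_k})^{-1}$ is the generating function of $p_\lambda(n)$. First I would substitute $z=e^{-s}$ and take the radius of $C_\varepsilon$ to be $e^{-\alpha}$, where $\alpha=\alpha(n)>0$ is the saddle point; the circle then maps to the vertical segment $\{\alpha+it:-\pi<t\leqslant\pi\}$ and
$$p_\lambda(n)=\frac{1}{2\pi i}\int_{C_\varepsilon}\frac{\Lambda(z)}{z^{n+1}}\,dz=\frac{1}{2\pi}\int_{-\pi}^{\pi}e^{G(\alpha+it)}\,dt,\qquad G(s):=ns+\log\Lambda(e^{-s}),$$
which is legitimate since for integer $n$ the integrand $\Lambda(z)/z^{n+1}$ is single-valued on $\BbC\setminus\{0\}$ and $\Lambda(e^{-s})$ is holomorphic in $\Re(s)>0$, so the vertical line may be moved freely by Cauchy's theorem. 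From $\log\Lambda(e^{-s})=\sum_{k\geqslant1}\sum_{m\geqslant1}m^{-1}e^{-s\lambda_k m}$ one reads off that on the positive real axis $\tfrac{d}{ds}\log\Lambda(e^{-s})$ is negative, strictly increasing, with limit $-\infty$ as $s\to0^+$ and $0$ as $s\to\infty$; hence $G'(s)=n+\tfrac{d}{ds}\log\Lambda(e^{-s})$ has a unique zero $s=\alpha=\alpha(n)>0$, namely the solution of \eqref{eq:saddle}, with $\alpha(n)\to0$ as $n\to\infty$. Differentiating again, $G''(s)=\sum_{k,m}\lambda_k^2\,m\,e^{-s\lambda_k m}>0$, and since $\alpha$ is defined by $n=-\tfrac{d}{ds}\log\Lambda(e^{-s})\big|_{s=\alpha}$, regarding $n$ as a function of $\alpha$ gives $G''(\alpha)=-\tfrac{dn}{ds}\big|_{s=\alpha}$, the quantity in the theorem.

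The core is the standard splitting $\int_{-\pi}^{\pi}=\int_{|t|\leqslant\delta}+\int_{\delta<|t|\leqslant\pi}$ with a cutoff $\delta=\delta(n)$ chosen so that $G''(\alpha)\delta^2\to\infty$ while $G'''(\alpha)\delta^3\to0$. On the central range, the Taylor expansion at the saddle
$$G(\alpha+it)=G(\alpha)-\tfrac12 G''(\alpha)t^2+O\!\big(G'''(\alpha)|t|^3\big)$$
gives $e^{G(\alpha+it)}=e^{G(\alpha)}e^{-\frac12 G''(\alpha)t^2}\big(1+O(G'''(\alpha)|t|^3)\big)$; integrating, completing the Gaussian to all of $\BbR$ (the Gaussian mass beyond $\delta$ being negligible by the choice of $\delta$), and using $e^{G(\alpha)}=e^{n\alpha}\Lambda(e^{-\alpha})$ produces
$$\frac{1}{2\pi}\int_{|t|\leqslant\delta}e^{G(\alpha+it)}\,dt=\frac{e^{n\alpha}\Lambda(e^{-\alpha})}{\sqrt{2\pi\,G''(\alpha)}}\Big(1+O\big(G'''(\alpha)G''(\alpha)^{-3/2}\big)\Big),$$
which is the asserted main term $\tfrac{e^{n\alpha}\Lambda(e^{-\alpha})}{\sqrt{2\pi}}\sqrt{1/(-dn/ds|_{s=\alpha})}$ together with an error that one checks is $O(n^{-3/2})$ in the normalisation of the statement after tracking the growth of $G''(\alpha)$ and $G'''(\alpha)$ as $\alpha\to0^+$. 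For the peripheral range one must show that $\Re\big(G(\alpha+it)-G(\alpha)\big)$ drops far enough below $0$ — enough to overcome the $1/\sqrt{G''(\alpha)}$ normalisation of the central contribution — uniformly for $\delta<|t|\leqslant\pi$; equivalently, one needs a suitable lower bound on $\sum_k e^{-\alpha\lambda_k}\big(1-\cos(\lambda_k t)\big)$ away from $t=0$, so that $|\Lambda(e^{-(\alpha+it)})|$ is genuinely much smaller than $\Lambda(e^{-\alpha})$ there. This is exactly where the sequence $(\lambda_k)$ matters: one uses that enough parts $\lambda_k\lesssim1/\alpha$ contribute and that they are not too "resonant" (a mild gcd/spreading condition), which holds for the sequences in this paper since $\lambda_1=1$ and the parts are otherwise controlled.

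The genuine obstacle is uniformity rather than the heuristic: one must bound the Dirichlet-type sums $G^{(j)}(\alpha)=\sum_{k,m}\lambda_k^j m^{j-1}e^{-\alpha\lambda_k m}$ and all implied constants as $n\to\infty$, $\alpha\to0^+$, in order to (i) choose $\delta$ legitimately, (ii) confirm the central-range error is of the stated size, and (iii) handle the peripheral contribution. Carrying out these estimates under the appropriate hypotheses on $(\lambda_k)$ is precisely the content of Coons and Kirsten \cite{CK2009} (building on Nanda \cite{N1954} and Richmond \cite{R1975,R1975/76}), to which we refer for the details; in the present paper we invoke it with $\lambda_k=F_k$.
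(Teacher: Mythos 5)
Your outline is correct and matches how the paper itself treats this statement: Theorem~\ref{thm:CK2009} is quoted from Coons and Kirsten \cite{CK2009} and not reproved, the paper noting only (in the remark that follows it) that the proof proceeds by iteratively applying Olver's asymptotics for exponential integrals \cite[Theorem 7.1]{O1974} --- precisely the saddle-point computation you sketch, including the identification $G''(\alpha)=-\left.\frac{dn}{ds}\right|_{s=\alpha}$. Like the paper, you correctly isolate the uniformity of the $G^{(j)}(\alpha)$ bounds and the peripheral-range decay as the real content and defer them to the cited source, which is appropriate here.
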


\begin{remark}The proof of Theorem \ref{thm:CK2009} was accomplished by iteratively applying an asymptotic result on exponential integrals, which  can be found in the book of Olver \cite[p.~127, Theorem 7.1]{O1974}. Note that Coons and Kirsten \cite{CK2009} use the notation $t_\Lambda^0(n)$ for our definition of $p_\lambda(n)$ above. \exend
\end{remark}
 
\begin{remark} The statement ``$\alpha$ must be thought of as being replaced by its large-$n$ asymptotic expansion'' may seem a bit cumbersome, but, here, the point is that the solution of \eqref{eq:saddle} gives $n$ as a function of the saddle point $\alpha$ as an (asymptotically) monotonic function, so it is invertible; that is, there is a well-defined asymptotic for the saddle point $\alpha$ in terms $n$---this is precisely the method we employ.\end{remark} 

To apply this Theorem \ref{thm:CK2009}, we must first determine the saddle point for large values of $n$. From \eqref{eq:saddle}, we have that \begin{equation}\label{eq:nint}n=-\left.\frac{d}{ds}\log F_2(e^{-s})\right|_{s=\alpha}.\end{equation} We will use this combined with Theorem \ref{thm:Fform} to prove the following result.

\begin{lemma}\label{lem:nalpha} There exists a function $h_0(s)$ satisfying $h_0(s)=h_0(\varphi s)$ such that for sufficiently large $n$, or, equivalently, for sufficiently small $\alpha>0$, $$n=\frac{-\log(\alpha)}{\alpha\log\varphi}+\frac{h_0(\alpha)}{\alpha}+O\left(\alpha\right).$$ 
\end{lemma}

\begin{proof} Let $f(s):=g(s)+h(s)$, where $g(s)$ and $h(s)$ are as in Propositions \ref{prop:g} and \ref{prop:h}, respectively. As $s\to0^+$, using Proposition \ref{prop:g} and one application of L'H\^opital's rule gives $g(s)=O(s^2)$ (in fact, one gets that $g(s)=O(s^{4-\varepsilon})$ for any fixed small positive $\varepsilon$, but only $O(s^2)$ is necessary after later comparison with the asymptotics for $h(s)$). Now, collectively, the sum of all of the terms with $k>0$ in the formula for $h(s)$ in Proposition \ref{prop:h} go to zero as $s\to0^+$, since $\Re(-s(n,k))=2k>0$. Thus, as $s=\alpha\to0^+$, separating out the $k=0$ term of the sum, denoting it $[k=0]h(s)$, and noting that $s(n,0)=\frac{2\pi i n}{\log\varphi}$, we have, $$ f(\alpha)=[k=0]h(\alpha)+O(\alpha^2)=\frac{1}{\log\varphi}\sum_{\substack{n=-\infty\\ n\neq 0}}^\infty \left(\frac{\alpha}{\sqrt{5}}\right)^{-\frac{2\pi i n}{\log\varphi}}\Gamma(\tfrac{2\pi i n}{\log\varphi})\, \zeta(1+\tfrac{2\pi i n}{\log\varphi})+O(\alpha^{2}),$$ so that $$f'(\alpha)=\frac{\sqrt{5}}{\alpha}\cdot [k=0]h(\alpha)+O(\alpha).$$ The form of $[k=0]h(s)$ immediately implies that $[k=0]h(s)=[k=0]h(\varphi s)$.

Now, we calculate \begin{align}\nonumber n&=-\left.\frac{d}{ds}\log F_2(e^{-s})\right|_{s=\alpha}\\
\nonumber&=-\left.\frac{d}{ds}\left(\frac{(\log s)^2}{2\log\varphi}-(\log s)\left(\frac{c_3}{\log\varphi}-1\right)+c_2+2\gamma+f(s)+O(s^2)\right)\right|_{s=\alpha}\\
\label{eq:na}&=-\frac{\log \alpha}{\alpha\log\varphi}-\frac{1}{\alpha}\left(\frac{c_3}{\log\varphi}-1-\sqrt{5}\cdot [k=0]h(\alpha)\right)+O\left(\alpha\right).
\end{align} setting $h_0(s):=\sqrt{5}\cdot [k=0]h(s)+1-c_3/\log\varphi$ gives the result.
\end{proof}

\begin{remark} Lemma \ref{lem:nalpha} provides the leading two terms for $n$ in terms of the saddle point $\alpha$. Here, the first term shows that the relationship is asymptotically monotonic, so that it can be inverted. The second term is oscillatory. If one follows this method and tries to apply it to the distinct Fibonacci partitions function $q_F(n)$ the resulting asymptotic is not monotonic---the leading term is oscillatory. This is precisely why this method doesn't immediately generalise to distinct Fibonacci partitions. 
\end{remark}

While the above lemma gives $n$ as a function of $\alpha$, we necessarily need $\alpha$ as a function of $n$ to apply the saddle point method. We achieve this via the Lambert $W$-function. 
        
\begin{proposition}\label{prop:alphan} There is a continuous $1$-periodic function $\psi_0(x)$, such that for sufficiently small $\alpha>0$, or, equivalently, for sufficiently large $n$, $$\alpha=\frac{W\left(e^{\psi_0\left(\frac{\log n}{\log\varphi}\right)/\log\varphi}n/\log\varphi\right)}{n\log \varphi}\left(1+O\left(\frac{\log n}{n^2}\right)\right),$$ where $W(x)$ denotes Lambert's $W$-function.
\end{proposition}

\begin{proof}
Note that the previous lemma gives that $$n=\left(\frac{-\log(\alpha)}{\alpha\log\varphi}+\frac{h_0(\alpha)}{\alpha}\right)\left(1+O\left(\frac{\alpha^2}{\log{\alpha}}\right)\right),$$ where $h_0(s)$ is fixed along any sequence $\{x\varphi^m\}_{m\geqslant 0}$. We use this property to invert the above relationship between $\alpha$ and $n$. Note that the relationship is invertible because the lead asymptotics are strictly monotonic.
Now, when one inverts $n\sim A\frac{1}{\alpha}\log \frac{1}{\alpha}+B\frac{1}{\alpha}$, one gets $\alpha\sim A\cdot W\left(e^{B/A}\frac{n}{A}\right)/n\sim A\log n/n$, where $W(x)$ is Lambert's $W$-function. Doing this along the sequences $\{x\varphi^m\}_{m\geqslant 0}$ to ensure a constant $B=B(x)$, we then reconstruct, using a fundamental interval, say $x\in[\varphi,\varphi^2]$, to get a continuous $1$-periodic function $\psi_0(x)$ such that for large $n$,  $$\alpha=\frac{W\left(e^{\psi_0\left(\frac{\log n}{\log\varphi}\right)/\log\varphi}n/\log\varphi\right)}{n\log \varphi}\left(1+O\left(\frac{\log n}{n^2}\right)\right).$$ Here, we have used the original relationship to find that $O(\alpha/\log\alpha)=O(1/n),$ and then the inverse, noting that $W(n)\sim\log n$. 
\end{proof}

In what follows, we will use Proposition \ref{prop:alphan} to give asymptotics for several functions of $\alpha$, including $n\alpha$, $\left.\frac{dn}{ds}\right|_{s=\alpha}$, $\log\alpha$ and $(\log\alpha)^2$. To give our end result, we will need varying orders of precision for the asymptotics for each of these terms. In particular, first order asymptotics of the Lambert $W$-function will not be enough to deal with 
$(\log\alpha)^2$, though they will be enough for some terms so we record them below. For convenience, we note here that the Lambert $W$-function satisfies, \begin{equation}\label{eq:W}W(x)=\log x-\log\log x+\frac{\log\log x}{\log x}+O\left(\frac{\log\log x}{(\log x)^2}\right),\end{equation} as $x\to\infty$; see, e.g., Corless et al.~\cite{C1996}.                                                                                             

\begin{corollary}\label{cor:alphafirstorder} For sufficiently small $\alpha>0$, or, equivalently, for sufficiently large $n$, $$\alpha=\frac{\log n}{n\log \varphi}\left(1+O\left(\frac{\log \log n}{\log n}\right)\right),$$ and $$\log\alpha=-\log n+\log\log n-\log\log\varphi+O\left(\frac{\log \log n}{\log n}\right).$$
\end{corollary}

\begin{proof} The first result follows directly from the fact that
 $$W(x)=\log x(1+O(\log\log x/\log x))$$ for all $x$ sufficiently large. The second follows immediately from the first.
\end{proof} 

\begin{corollary}\label{cor:dndalpha} For sufficiently small $\alpha>0$, or, equivalently, for sufficiently large $n$, $$\left.\frac{dn}{ds}\right|_{s=\alpha}=\frac{-n^2\log \varphi}{\log n}\left(1+O\left(\frac{\log \log n}{\log n}\right)\right).$$
\end{corollary}

\begin{proof} We start with Lemma \ref{lem:nalpha} in the form $n=-\log(\alpha)/(\alpha\log\varphi)+O\left({1}/{\alpha}\right),$ and take a derivative, then apply both parts of Corollary \ref{cor:alphafirstorder} to get $$\left.\frac{dn}{ds}\right|_{s=\alpha}=\frac{\log \alpha}{\alpha^2\log\varphi}+O\left(\frac{1}{\alpha^2}\right)=\frac{\log \alpha}{\alpha^2\log\varphi}\left(1+O\left(\frac{1}{\log\alpha}\right)\right)=\frac{-n^2\log \varphi}{\log n}\left(1+O\left(\frac{\log \log n}{\log n}\right)\right),$$ which finishes the proof.
\end{proof}

The final term necessary is $(\log \alpha)^2$. Here, in addition to using the full asymptotic in \eqref{eq:W}, we will use the fact that for any $y$, $\log W(y)=\log y-W(y)$.

\begin{corollary}\label{cor:log2} For sufficiently small $\alpha>0$, or, equivalently, for sufficiently large $n$, \begin{multline*}(\log\alpha)^2=\log n\left(\log n-2\log\log n-\frac{4\psi_0\hspace{-.1cm}\left(\frac{\log n}{\log\varphi}\right)}{\log\varphi}+2\log\log\varphi\right)\\ +\log\log n\left(\log\log n+2-\frac{4\psi_0\hspace{-.1cm}\left(\frac{\log n}{\log\varphi}\right)}{\log\varphi}+2\log\log\varphi\right)\\ +(\log\log\varphi)^2-\frac{2\psi_0\hspace{-.1cm}\left(\frac{\log n}{\log\varphi}\right)}{\log\varphi}+2\log\log\varphi+O\left(\frac{(\log\log n)^2}{\log n}\right).\end{multline*}
\end{corollary}
 
\begin{proof} We start with Proposition \ref{prop:alphan} and take the natural logarithm of both sides to obtain \begin{equation}\label{eq:logW2}(\log\alpha)^2=\left(\log\left(\frac{W\left(e^{\psi_0\left(\frac{\log n}{\log\varphi}\right)/\log\varphi}n/\log\varphi\right)}{n\log \varphi}\right)\right)^2+O\left(\frac{(\log n)^2}{n^2}\right),\end{equation} since $$\log\left(\frac{W\left(e^{\psi_0\left(\frac{\log n}{\log\varphi}\right)/\log\varphi}n/\log\varphi\right)}{n\log \varphi}\right)=O(\log n).$$ We now use the full force of \eqref{eq:W} to give an asymptotic for the first term in \eqref{eq:logW2}, first noting that \begin{align*}\log\left(\frac{W\left(e^{\psi_0\left(\frac{\log n}{\log\varphi}\right)/\log\varphi}n/\log\varphi\right)}{n\log \varphi}\right)&=\log\left(e^{\psi_0\left(\frac{\log n}{\log\varphi}\right)/\log\varphi}n/\log\varphi\right)\\
&\qquad\qquad-\log(n\log\varphi)-W\left(e^{\psi_0\left(\frac{\log n}{\log\varphi}\right)/\log\varphi}n/\log\varphi\right)\\
&=\frac{\psi_0\hspace{-.1cm}\left(\frac{\log n}{\log\varphi}\right)}{\log\varphi}-W\left(e^{\psi_0\left(\frac{\log n}{\log\varphi}\right)/\log\varphi}n/\log\varphi\right).
\end{align*} Thus, \begin{multline}\label{logW2}\left(\log\left(\frac{W\left(e^{\psi_0\left(\frac{\log n}{\log\varphi}\right)/\log\varphi}n/\log\varphi\right)}{n\log \varphi}\right)\right)^2=\left(\frac{\psi_0\hspace{-.1cm}\left(\frac{\log n}{\log\varphi}\right)}{\log\varphi}\right)^2\\ -\frac{2\psi_0\hspace{-.1cm}\left(\frac{\log n}{\log\varphi}\right)}{\log\varphi}W\left(e^{\psi_0\left(\frac{\log n}{\log\varphi}\right)/\log\varphi}n/\log\varphi\right)+W\left(e^{\psi_0\left(\frac{\log n}{\log\varphi}\right)/\log\varphi}n/\log\varphi\right)^2.\end{multline} For the middle term we use the first three terms of the asymptotic of $W$ in \eqref{eq:W} and, for the square, we will use the square of all of \eqref{eq:W}, which is, as $x\to \infty$,  $$W(x)^2=(\log x)^2+(\log\log x)^2-2\log x\log\log x+2\log\log x+O\left(\frac{(\log\log x)^2}{\log x}\right).$$ To this end, we to determine strong estimates for the asymptotics of $\log x$, $\log\log x$ and $\log x \log\log x$ with $x=e^{\psi_0\left(\frac{\log n}{\log\varphi}\right)/\log\varphi}n/\log\varphi.$ Here, we have \begin{align}\label{logna}\log\left(e^{\psi_0\left(\frac{\log n}{\log\varphi}\right)/\log\varphi}n/\log\varphi\right)&=\log n+\frac{\psi_0\hspace{-.1cm}\left(\frac{\log n}{\log\varphi}\right)}{\log\varphi}-\log\log\varphi\\
\nonumber&=\log n\left(1+\frac{\psi_0\hspace{-.1cm}\left(\frac{\log n}{\log\varphi}\right)/\log\varphi-\log\log\varphi}{\log n}\right),\end{align} and so, using that as $y\to 0$, $\log(1+y)=y-y^2/2+O(y^3)$, we have \begin{equation}\label{loglogna}\log\log\left(e^{\psi_0\left(\frac{\log n}{\log\varphi}\right)/\log\varphi}n/\log\varphi\right)=\log\log n+\frac{\psi_0\hspace{-.1cm}\left(\frac{\log n}{\log\varphi}\right)/\log\varphi-\log\log\varphi}{\log n}+O\left(\frac{1}{(\log n)^2}\right).\end{equation} So, using the above asymptotics, \begin{equation}\label{W} W\left(e^{\psi_0\left(\frac{\log n}{\log\varphi}\right)/\log\varphi}n/\log\varphi\right)=\log n+\log\log n+\frac{\psi_0\hspace{-.1cm}\left(\frac{\log n}{\log\varphi}\right)}{\log\varphi}-\log\log\varphi +O\left(\frac{\log\log n}{\log n}\right),\end{equation} and \begin{multline}\label{W2} W\left(e^{\psi_0\left(\frac{\log n}{\log\varphi}\right)/\log\varphi}n/\log\varphi\right)^2=(\log n)^2-2\log n\log\log n\\ -2\log n\left(\frac{\psi_0\hspace{-.1cm}\left(\frac{\log n}{\log\varphi}\right)}{\log\varphi}-\log\log\varphi\right)
+(\log\log n)^2+2\log\log n\left(1+\log\log\varphi-\frac{\psi_0\hspace{-.1cm}\left(\frac{\log n}{\log\varphi}\right)}{\log\varphi}\right)\\+\left(\frac{\psi_0\hspace{-.1cm}\left(\frac{\log n}{\log\varphi}\right)}{\log\varphi}-\log\log\varphi\right)^2
-2\left(\frac{\psi_0\hspace{-.1cm}\left(\frac{\log n}{\log\varphi}\right)}{\log\varphi}-\log\log\varphi\right)+O\left(\frac{(\log\log n)^2}{\log n}\right).\end{multline} Combining \eqref{eq:logW2}, \eqref{logW2}, \eqref{W} and \eqref{W2} gives the result.
\end{proof}

\begin{proposition}\label{prop:f} Let $f(s):=g(s)+h(s)$, where $g(s)$ and $h(s)$ are as in Propositions \ref{prop:g} and \ref{prop:h}, respectively. Then, as $s\to0^+$, the function $f(s)$ satisfies $f(s)=f(\varphi s)+O(s^2)$. That is, for sufficiently small $\alpha>0$, or equivalently, for sufficiently large $n$, $$f(\alpha)=\psi_1\left(\frac{\log n}{\log\varphi}\right)+O\left(\frac{\log\log n}{\log n}\right),$$ for some $1$-periodic function $\psi_1(x)$, and any $\varepsilon>0$.
\end{proposition}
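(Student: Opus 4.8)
The plan is to combine the two functional-near-invariances already established—$g(s)=g(\varphi s)+O(s^2)$ from Proposition \ref{prop:g} (indeed $g(s)=O(s^2)$ outright) and $h(s)=h(\varphi s)+O(s^2)$ from Proposition \ref{prop:h}—so that $f(s)=g(s)+h(s)$ inherits $f(s)=f(\varphi s)+O(s^2)$ immediately by addition. This is the first, essentially formal step. The content of the proposition is then to upgrade this approximate self-similarity under scaling by $\varphi$ into an honest $1$-periodic function of the variable $x=\log s/\log\varphi$ (equivalently $\log n/\log\varphi$, up to the change of variables via Corollary \ref{cor:alphan}), together with an explicit error term when we evaluate at the saddle point $s=\alpha$.

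The key steps I would carry out: First, iterate the relation $f(s)=f(\varphi s)+O(s^2)$. Writing $s=\varphi^{-N}u$ with $u\in[1,\varphi)$ and $N\to\infty$, the telescoping sum $f(s)=f(\varphi^{N}s)+\sum_{j=0}^{N-1}\bigl(f(\varphi^{j}s)-f(\varphi^{j+1}s)\bigr)$ has error terms of size $O((\varphi^{j}s)^2)$ which form a convergent geometric-type series dominated by $O(s^2)$; the leading part $f(\varphi^{N}s)$ converges as $N\to\infty$ because $\varphi^N s\to u$ and $f$ is continuous there. This shows the limit $\psi_0(u):=\lim_{N\to\infty}f(\varphi^{-N}u)$ exists for $u>0$, defines $\psi(x):=\psi_0(\varphi^{\{x\}\log\varphi})$—or more cleanly $\psi(x)=\lim_{N\to\infty} f(\varphi^{x-N})$ reading $\varphi^x:=e^{x\log\varphi}$—and that $\psi$ is $1$-periodic in $x$ since $f(\varphi^{x+1-N})=f(\varphi^{x-(N-1)})$. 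Second, quantify $|f(s)-\psi(\log s/\log\varphi)|\le \sum_{j\ge 0}O((\varphi^{j}s)^2)\cdot[\text{only finitely many before }\varphi^j s\gtrsim 1]$; since the $O(s^2)$ constants are uniform on $(0,1]$ (this uses that the bound in \eqref{eq:hsbound} and the $g$-estimate are uniform), we get $f(s)=\psi(\log s/\log\varphi)+O(s^2)$. Third, substitute $s=\alpha$: by Proposition \ref{prop:alphan}, $\alpha\asymp 1/(n\log n)$-ish, so $O(\alpha^2)=O(1/n^2)$, and $\log\alpha/\log\varphi=(-\log n-\log\log\varphi+O(1/\log n))/\log\varphi$; the additive constant $-\log\log\varphi/\log\varphi$ can be absorbed into a redefinition of the $1$-periodic function (a phase shift), and the $O(1/\log n)$ inside the argument of $\psi$ can be pushed out using continuity/Lipschitz control on $\psi$—this last point needs that $\psi$ is at least Lipschitz, which follows because $f$ is smooth on $(0,\infty)$ and the convergence $f(\varphi^{-N}u)\to\psi_0(u)$ is uniform on $[1,\varphi]$ together with its derivative, or alternatively by noting $|f(s)-f(s')|=O(|s-s'|)$ locally plus the $O(s^2)$ approximation.

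The main obstacle I expect is the final step: controlling the $O(1/\log n)$ error sitting \emph{inside} the argument of the periodic function $\psi$. Transferring it to an outside additive $O(1/\log n)$ error (or, as the statement seems to want, showing it is harmless) requires a modulus-of-continuity estimate for $\psi$, and since $\psi$ is defined as a limit of a telescoping series one must check that this limit is Lipschitz (or Hölder) rather than merely continuous. The natural route is to differentiate the relation $f(s)=f(\varphi s)+O(s^2)$: if one also has $f'(s)=\varphi f'(\varphi s)+O(s)$ with uniform constants—which follows from differentiating the explicit series for $g$ and the bound \eqref{eq:hsbound} for $h$ termwise—then the same telescoping argument gives that $s f'(s)$ converges to a bounded $1$-periodic (in $\log s/\log\varphi$) function, yielding the needed Lipschitz bound. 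I would flag that the stray ``for any $\varepsilon>0$'' in the statement appears vestigial (no $\varepsilon$ occurs in the conclusion) and would either remove it or state explicitly that the $O(1/n^2)$ may be weakened to $O(1/n^{2-\varepsilon})$ if one wants cleaner constants; everything else is bookkeeping once the uniformity of the $O(s^2)$ terms from Propositions \ref{prop:g} and \ref{prop:h} is invoked.
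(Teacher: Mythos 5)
Your proposal is essentially correct, but it takes a genuinely different route from the paper. The paper does not construct $\psi$ abstractly from the approximate functional equation; instead it observes that the $k=0$ part of $h(s)$ is \emph{exactly} an absolutely convergent Fourier series in $\log(s/\sqrt{5})/\log\varphi$, namely $\frac{2}{\log\varphi}\sum_{n\geqslant 1}\Re\bigl(\bigl(\tfrac{s}{\sqrt5}\bigr)^{-2\pi i n/\log\varphi}\Gamma(\tfrac{2\pi i n}{\log\varphi})\zeta(1+\tfrac{2\pi i n}{\log\varphi})\bigr)$, while $g(s)$ and the $k\geqslant 1$ terms of $h(s)$ are $O(s^2)$ by Propositions \ref{prop:g} and \ref{prop:h}. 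So $\psi$ is written down explicitly, with coefficients decaying like $e^{-\pi^2 n/\log\varphi}$; this makes $\psi$ real-analytic, so the Lipschitz control you correctly identify as the crux comes for free, and it is what allows Theorem \ref{thm:mainpartitions} to claim an \emph{explicit} periodic function. Your telescoping construction is sound in principle and more general, but it buys less: it yields only continuity of $\psi$ without further work, and you would lose the explicitness required later. One slip in your write-up: the $O(s^2)$ closeness of $f(s)$ to $\psi_0$ does not come from the partial sums $\sum_{j=0}^{N-1}O((\varphi^j s)^2)$ in the direction of increasing argument (that sum is $O(u^2)=O(1)$, not $O(s^2)$); it comes from the tail in the direction of decreasing argument, $|f(s)-\lim_{M}f(\varphi^{-M}\cdot\varphi^{M}s)|\leqslant\sum_{m\geqslant 1}O((\varphi^{-m}s)^2)=O(s^2)$, so you should telescope toward $0$ rather than toward $u\in[1,\varphi)$. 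Finally, your flag about the $O(1/\log n)$ sitting inside the argument of $\psi$ after substituting Corollary \ref{cor:alphan} is well taken: the paper glosses over this, and strictly the displayed error $O(1/n^2)$ holds only with the argument $-\log(\alpha/\sqrt5)/\log\varphi$; after passing to $\log n/\log\varphi$ the honest additive error is $O(1/\log n)$, which is harmless because it is absorbed into the multiplicative $(1+O(1/\log n))$ factors in the proof of the main theorem.
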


\begin{proof} We proceed as in the proof of Lemma \ref{lem:nalpha}, and, as in that proof, separating out the $k=0$ term of the sum for $f(\alpha)$ and denoting it $[k=0]h(s)$, as $s=\alpha\to0^+$, \begin{align}\nonumber f(\alpha)=[k=0]h(\alpha)+O(\alpha^2)&=\frac{1}{\log\varphi}\sum_{\substack{n=-\infty\\ n\neq 0}}^\infty \left(\frac{\alpha}{\sqrt{5}}\right)^{-\frac{2\pi i n}{\log\varphi}}\Gamma(\tfrac{2\pi i n}{\log\varphi})\, \zeta(1+\tfrac{2\pi i n}{\log\varphi})+O(\alpha^{2})\\
\nonumber&=\frac{2}{\log\varphi}\sum_{n=1}^\infty \Re\left(\left(\frac{\alpha}{\sqrt{5}}\right)^{-\frac{2\pi i n}{\log\varphi}}\Gamma(\tfrac{2\pi i n}{\log\varphi})\, \zeta(1+\tfrac{2\pi i n}{\log\varphi})\right)+O(\alpha^{2})\\
\label{eq:hsperiod}&=\frac{2}{\log\varphi}\sum_{n=1}^\infty \Re\left(\left(\cos\left(\tfrac{2\pi  n}{\log\varphi}\log\left(\tfrac{\alpha}{\sqrt{5}}\right)\right)-i\sin\left(\tfrac{2\pi  n}{\log\varphi}\log\left(\tfrac{\alpha}{\sqrt{5}}\right)\right)\right)\right.\\
\nonumber&\qquad\qquad\qquad\qquad\left.\times\Gamma(\tfrac{2\pi i n}{\log\varphi})\, \zeta(1+\tfrac{2\pi i n}{\log\varphi})\right)+O(\alpha^{2}).
\end{align} Since both the sine and cosine functions are $2\pi$-periodic, \eqref{eq:hsperiod} gives that $f(s)=f(\varphi s)+O(s^2)$ as $s\to0^+$. Applying Corollary \ref{cor:alphafirstorder}, we obtain $$\frac{2\pi  j}{\log\varphi}\log\left(\tfrac{\alpha}{\sqrt{5}}\right)=-2\pi  j\left(\frac{\log n}{{\log\varphi}}\right)+O\left(\frac{\log\log n}{\log n}\right).$$ Finally, we note that $O(\alpha^{2})=O((\log n)^2/n^2)=O\left(\log\log n/\log n\right)$ and set $\psi_1(s)=[k=0]h(s)$ to finish the proof.
\end{proof}

We now have all of the elements to continue with our proof Theorem \ref{thm:mainpartitions} in the case of non-distinct partitions of $n$.

\begin{proof}[Proof of Theorem \ref{thm:mainpartitions}] We evaluate the pieces of the asymptotic in Theorem \ref{thm:CK2009}. Towards this end, Proposition \ref{prop:alphan} combined with \eqref{W} gives, \begin{align*} {n\alpha}&={\frac{1}{\log\varphi}W\left(e^{\psi_0\left(\frac{\log n}{\log\varphi}\right)/\log\varphi}n/\log\varphi\right)\left(1+O\left(\frac{\log n}{n^2}\right)\right)}\\
&=\frac{\log n}{\log\varphi}+\frac{\log\log n}{\log\varphi}+\frac{\psi_0\hspace{-.1cm}\left(\frac{\log n}{\log\varphi}\right)}{(\log\varphi)^2}-\frac{\log\log\varphi}{\log\varphi} +O\left(\frac{\log\log n}{\log n}\right),\end{align*} so that \begin{equation}\label{eq:T1}e^{n\alpha}=e^{\frac{1}{(\log\varphi)^2}{\psi_0\left(\frac{\log n}{\log\varphi}\right)}-\frac{\log\log\varphi}{\log\varphi}}n^{1/\log\varphi}(\log n)^{1/\log\varphi}\left(1+O\left(\frac{\log\log n}{\log n}\right)\right)\end{equation} Corollary \ref{cor:dndalpha} gives \begin{equation}\label{eq:T2}\left(\sqrt{\frac{1}{-\left.\frac{dn}{ds}\right|_{s=\alpha}}}+O\left(\frac{1}{n^{3/2}}\right)\right)=\frac{1}{n}\left(\frac{\log n}{\log\varphi}\right)^{1/2}\left(1+O\left(\frac{\log\log n}{\log n}\right)\right).\end{equation} It remains to determine the small $\alpha$, and hence large $n$, asymptotics of $F_2(e^{-\alpha})$. To this end, we combine Theorem \ref{thm:Fform} with Corollaries \ref{cor:alphafirstorder} and \ref{cor:log2}, and Proposition \ref{prop:f} to obtain, as $n\to\infty$, \begin{align*}\log F_2(e^{-\alpha})&=\frac{(\log \alpha)^2}{2\log\varphi}-(\log \alpha)\left(\frac{c_3}{\log\varphi}-1\right)+c_2+2\gamma+f(\alpha)+O(\alpha^2)\\
&=\frac{\log n}{2\log\varphi}\left(\log n-2\log\log n-\frac{4\psi_0\hspace{-.1cm}\left(\frac{\log n}{\log\varphi}\right)}{\log\varphi}+2\log\log\varphi+2c_3-2\log\varphi\right)\\
&\qquad+\frac{\log\log n}{2\log\varphi}\left(\log\log n+2-\frac{4\psi_0\hspace{-.1cm}\left(\frac{\log n}{\log\varphi}\right)}{\log\varphi}+2\log\log\varphi-2c_3+2\log\varphi\right)\\
&\qquad\qquad+\frac{(\log\log\varphi)^2}{2\log\varphi}-\frac{2\psi_0\hspace{-.1cm}\left(\frac{\log n}{\log\varphi}\right)}{2(\log\varphi)^2}+\frac{\log\log\varphi}{\log\varphi}+\log\log\varphi\left(\frac{c_3}{\log\varphi}-1\right)\\
&\qquad\qquad\qquad+c_2+2\gamma+\psi_1\left(\frac{\log n}{\log\varphi}\right)+O\left(\frac{(\log\log n)^2}{\log n}\right).\end{align*} Thus, we have \begin{equation*}F_2(e^{-\alpha})=\psi_2(n)n^{a(n)}(\log n)^{b(n)}\left(1+O\left(\frac{(\log\log n)^2}{\log n}\right)\right),\end{equation*} where $\psi_2(n)$ is the strictly positive bounded (above and below) function \begin{align*}\psi_2\left(n\right)&:=\exp\Bigg(\frac{(\log\log\varphi)^2}{2\log\varphi}-\frac{2\psi_0\hspace{-.1cm}\left(\frac{\log n}{\log\varphi}\right)}{2(\log\varphi)^2}+\frac{\log\log\varphi}{\log\varphi}\\ 
&\qquad\qquad\qquad+\log\log\varphi\left(\frac{c_3}{\log\varphi}-1\right)+c_2+2\gamma+\psi_1\hspace{-.1cm}\left(\frac{\log n}{\log\varphi}\right)\Bigg),\\
a(n)&:=\frac{1}{2\log\varphi}\left(\log n-2\log\log n-\frac{4\psi_0\hspace{-.1cm}\left(\frac{\log n}{\log\varphi}\right)}{\log\varphi}+2\log\log\varphi+2c_3-2\log\varphi\right)\\
b(n)&:=\frac{1}{2\log\varphi}\left(\log\log n+2-\frac{4\psi_0\hspace{-.1cm}\left(\frac{\log n}{\log\varphi}\right)}{\log\varphi}+2\log\log\varphi-2c_3+2\log\varphi\right).\end{align*} Combing the asymptotic for $F_2(e^{-\alpha})$ with equations \eqref{eq:T1} and \eqref{eq:T2} gives the desired result.
\end{proof}

\section{Partitions over general linear recurrences with a dominant root}\label{sec:P}

The results of the previous sections on Fibonacci partitions can be generalised to positive recurrence sequences $P_n$ with an irreducible characteristic polynomial having a positive dominant real root and such that $P_1=1$. We will also assume that the values $P_n$ are distinct. Here, we require $P_1=1$ so that there always exists a non-distinct partition of $n$ over $P_n$. The restriction on the values of $P_n$ being distinct is not a very strict one---since the recursion has a dominant real root there are at most finitely many repeated values, so an analysis analogous to moving between $F(x)$ and $F_2(x)$ is possible, and uncomplicated. For such $P_n$, we wish to asymptotically understand the number of solutions to \begin{equation}\label{eq:nP} n=a_1P_1+a_2P_2+\cdots+a_kP_k+\cdots.\end{equation} As before, we let $p_P(n)$ denote the number of non-distinct partitions of $n$ over the sequence $P_n$, that is, solutions to \eqref{eq:nfib} in nonnegative integers $a_k$.

The case of partitions over $P_n$ is carried out exactly as in the case with the Fibonacci numbers $F_n$, but with the Fibonacci zeta function replaced by the zeta function $\zeta_P(z)$ which is the meromorphic continuation of the Dirichlet series $\sum_{k\geqslant 1}P_k^{-z}$ ($\Re(z)>0$). Here, we consider the generating function, $$F_P(x):=\sum_{n\geqslant 0}p_P(n)x^n=\prod_{k\geqslant 1}\left(1-x^{P_k}\right)^{-1}.$$ 

To complete our analysis, we use the following result of Serrano Holdago and Navas Vicente~\cite{HN2023}, which is a generalisation of Navas \cite{N2001}.

\begin{proposition}[Serrano Holdago and Navas Vicente, 2023]\label{prop:HN2023} Let $P(x)$ be the minimal polynomial with $\deg P(x)=r$ of the linear recurrence $P_n$ (as described above) and let $\beta>1$ be the dominant root of $P(x)$. Then the Dirichlet series $\zeta_P(z):=\sum_{k\geqslant 1}P_k^{-z}$ ($\Re(z)>0$) can be analytically continued to a meromorphic function, also denoted $\zeta_P(z)$, all of whose singularities are simple poles at the points $$s{(n,\boldsymbol{k})}:=\frac{\log|\beta^{-k_1}\beta_2^{k_1-k_2}\cdots \beta_r^{k_{r-1}}|}{\log \beta}+i\cdot\frac{\arg(\beta_1^{-k_1}\beta_2^{k_1-k_2}\cdots \beta_r^{k_{r-1}})+2\pi n}{\log \beta},$$ where $\boldsymbol{k}=(k_1,\ldots,k_{r-1})$, $\beta_2,\ldots,\beta_r$ are the algebraic conjugates of $\beta$, $n\in\mathbb{Z}$ and the parameters $k_1,\ldots,k_{r-1}$ are integers satisfying $0\leqslant k_{r-1}\leqslant k_{r-2}\leqslant \cdots \leqslant k_{1}$.
\end{proposition}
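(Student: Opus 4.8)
The plan is to reprise, in the general recurrence setting, the computation behind Lemma~\ref{lem:zetaF_0} and Proposition~\ref{prop:navas1} (following Navas~\cite{N2001}). Since the minimal polynomial is irreducible it is separable, so its roots $\beta=\beta_1,\beta_2,\dots,\beta_r$ are simple, and Binet's formula gives $P_k=\lambda\beta^k+\lambda_2\beta_2^k+\cdots+\lambda_r\beta_r^k$ with $|\beta_j/\beta|<1$ for $2\leqslant j\leqslant r$ because $\beta$ is dominant; as $P_k>0$ with $P_k\to\infty$, the leading coefficient $\lambda$ is a positive real, so $\lambda^{-z}=e^{-z\log\lambda}$ is entire. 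Write $P_k=\lambda\beta^k(1+w_k)$ with $w_k:=\sum_{j=2}^r(\lambda_j/\lambda)(\beta_j/\beta)^k$; then $|w_k|\to0$ geometrically, so we may fix $k_0$ with $|w_k|<1$ for $k\geqslant k_0$. Splitting $\zeta_P(z)=\sum_{1\leqslant k<k_0}P_k^{-z}+\sum_{k\geqslant k_0}P_k^{-z}$ and observing that the first (finite) sum is entire, it remains to continue the tail; for $k\geqslant k_0$ the generalized binomial and multinomial theorems give
\[
P_k^{-z}=\lambda^{-z}\beta^{-kz}\sum_{m\geqslant0}\binom{-z}{m}\sum_{\substack{m_2,\dots,m_r\geqslant0\\ m_2+\cdots+m_r=m}}\binom{m}{m_2,\dots,m_r}\prod_{j=2}^{r}\left(\frac{\lambda_j}{\lambda}\right)^{m_j}\left(\frac{\beta_j}{\beta}\right)^{km_j}.
\]

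For $\Re(z)>0$ everything converges absolutely, so I may sum over $k$ first: for a fixed multi-index $\boldsymbol m=(m_2,\dots,m_r)$ the $k$-sum is geometric with ratio $q_{\boldsymbol m}(z):=\beta^{-z}\prod_{j=2}^r(\beta_j/\beta)^{m_j}$, contributing $q_{\boldsymbol m}(z)^{k_0}/(1-q_{\boldsymbol m}(z))$. Thus, away from the zeros of the functions $1-q_{\boldsymbol m}$, we obtain $\zeta_P(z)$ as an entire function plus $\lambda^{-z}$ times a series over multi-indices of meromorphic terms, whose only singularities are simple poles located where $q_{\boldsymbol m}(z)=1$, i.e.\ where $\beta^{z}=\prod_{j=2}^r(\beta_j/\beta)^{m_j}=\beta_1^{-k_1}\beta_2^{k_1-k_2}\cdots\beta_r^{k_{r-1}}$ under the bijection $k_i:=m_{i+1}+m_{i+2}+\cdots+m_r$ ($1\leqslant i\leqslant r-1$) between multi-indices $\boldsymbol m$ and tuples $0\leqslant k_{r-1}\leqslant\cdots\leqslant k_1$. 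Taking all branches of the logarithm of this identity yields precisely $z=s(n,\boldsymbol k)$, $n\in\mathbb Z$; and since $q_{\boldsymbol m}'(z)=-(\log\beta)q_{\boldsymbol m}(z)$ does not vanish at $q_{\boldsymbol m}=1$, each pole is simple (if distinct $\boldsymbol m$ produce the same point, residues add but simplicity persists), with residue $\tfrac{1}{\log\beta}$ from $1/(1-q_{\boldsymbol m})$ times the analytic prefactor — the data one needs for the applications in Theorem~\ref{thm:genpart}.

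The one genuine obstacle is proving that this series over $\boldsymbol m$ converges locally uniformly on $\mathbb C$ minus $\{s(n,\boldsymbol k)\}$, so that it truly defines a meromorphic continuation with no further singularities. On a compact set $K$ one bounds $\big|\binom{-z}{m}\big|$ polynomially in $m$, the multinomial coefficients together with the $(\lambda_j/\lambda)^{m_j}$ by $C^m$ for a fixed $C$, and—crucially—$|q_{\boldsymbol m}(z)|=\beta^{-\Re z}\prod_{j=2}^r|\beta_j/\beta|^{m_j}$, which decays geometrically in $|\boldsymbol m|=\sum_j m_j$ since every $|\beta_j/\beta|<1$. Hence on $K$ one has $|q_{\boldsymbol m}(z)|\leqslant\tfrac12$, and so $|1-q_{\boldsymbol m}(z)|^{-1}\leqslant2$, for all but finitely many $\boldsymbol m$ (those with $\sum_j m_j$ below a threshold determined by $K$), whence the tail of the series is dominated by a convergent geometric-type series, while each of the finitely many exceptional $\boldsymbol m$ contributes a single isolated simple pole and is bounded on compacta avoiding it. (In particular $\{s(n,\boldsymbol k)\}$ is locally finite, as $\Re\,s(n,\boldsymbol k)\to-\infty$ when $\sum_j m_j\to\infty$ while for fixed $\boldsymbol k$ the poles are spaced $2\pi/\log\beta$ apart vertically.) Carrying out these estimates in full—exactly as in the $r=2$ Fibonacci instance above, where $\beta=\varphi$, $\beta_2=\bar\varphi=-1/\varphi$, $\lambda=1/\sqrt5$—completes the proof.
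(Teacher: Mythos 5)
This proposition is not proved in the paper at all: it is quoted from Serrano Holgado and Navas Vicente \cite{HN2023}, so there is no in-house argument to compare against. Your reconstruction is the standard Navas-style proof and is essentially the argument of \cite{HN2023} (and, for $r=2$, of \cite{N2001} and of the computation in Lemma \ref{lem:zetaF_0}): expand $P_k^{-z}=\lambda^{-z}\beta^{-kz}(1+w_k)^{-z}$ by the binomial and multinomial theorems, sum the geometric series in $k$, and read off the poles from $1-q_{\boldsymbol{m}}(z)=0$. Your bijection $k_i=m_{i+1}+\cdots+m_r$ correctly matches multi-indices to the tuples $0\leqslant k_{r-1}\leqslant\cdots\leqslant k_1$ and reproduces the location formula $s(n,\boldsymbol{k})$, and the simplicity of the poles from $q_{\boldsymbol{m}}'\neq 0$ is right. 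The one point to tighten is the choice of $k_0$: for both the interchange of summation on $\Re(z)>0$ and the locally uniform convergence of the resummed series, the condition $|w_k|<1$ is not quite enough, since your factor $C^m$ (absorbing the multinomial coefficients and the $|\lambda_j/\lambda|^{m_j}$) must be beaten by the geometric decay of $|q_{\boldsymbol{m}}(z)|^{k_0}$. The clean requirement is $\sum_{j=2}^r|\lambda_j/\lambda|\,|\beta_j/\beta|^{k_0}<1$, which groups the $|\beta_j/\beta|^{k_0 m_j}$ inside the multinomial sum and makes the shell sum over $|\boldsymbol{m}|=m$ exactly geometric; such a $k_0$ exists because every $|\beta_j/\beta|<1$. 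With $k_0$ so chosen, everything you assert goes through, and your parenthetical about coinciding poles is adequate since the proposition only claims that all singularities are simple poles located among the $s(n,\boldsymbol{k})$, not that every such point is a genuine pole.
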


We adopt the terminology of Proposition \ref{prop:HN2023} for the rest of this section along with the definitions of the real numbers $\lambda=\lambda_1,\lambda_2,\ldots,\lambda_r$, which satisfy $$P_n=\lambda\beta^n+\lambda_2\beta_2^n+\cdots+\lambda_r\beta_r^n.$$ Note that since $P_n$ is strictly increasing, we necessarily have that $\lambda>0$.

Continuing the analogy with the Fibonacci partitions, we need asymptotic results, as $s\to0^+$, of the functions $$\log F_P(e^{-s})= \frac{1}{2\pi i} \int_{a-i\infty}^{a+i\infty} s^{-z} \Gamma(z) \zeta(1+z)\zeta_P(z) dz.$$ As before, we have a few different types of poles to consider:
\begin{itemize}
\item a triple pole at $z=0$ in the case of $F_P(e^{-s})$,
\item simple poles at countable (and separated) non-integer real values $z\leqslant -\frac{\log(\beta/|\beta_2|)}{\log\beta}.$
\item double poles at $z\in-\mathbb{N}$, and
\item simple poles off the real line at $z=s(n,\boldsymbol{k})$.
\end{itemize}
Now, it is clear from the previous analysis on Fibonacci partitions that the main contribution will come from the pole at $z=0$, which comes only from $\boldsymbol{k}=\boldsymbol{0}=(0,\ldots,0).$ The negative real poles will give a cumulative contribution of $O\big(s^{\min\{{\log(\beta/|\beta_2|)}/{\log\beta},1-\varepsilon\}})$ for any fixed $\varepsilon>0$, since $s\log s=O(s^{1-\varepsilon})$ for any fixed $\varepsilon>0$ as $s\to0^+$. As well, the simple poles off the real line contribute a function $f_2(s)$ toward $\log F_P(e^{-s})$, that satisfies $f_2(s)=f_2(\beta s)+O\big(s^{\min\{{\log(\beta/|\beta_2|)}/{\log\beta},1-\varepsilon\}})$. It remains to obtain the contributions from the pole at $z=0$. For these, we require the following result. 

\begin{lemma}\label{lem:zetaP_0} Near $z=0$, we have $$\zeta_P(z)=\frac{1}{\log\beta}\cdot\frac{1}{z}-\frac{\log\lambda}{\log\beta}-\frac{1}{2}+\left(\frac{(\log\lambda)^2}{2\log\beta}+\frac{\log\lambda}{2}+\frac{\log\beta}{12}+C_1\right)z+O(z^2),$$ where $C_1:=\sum_{k\geqslant 1}\sum_{\boldsymbol{k}\neq\boldsymbol{0}}\frac{1}{k_1}{k_1\choose k_2}\cdots{k_{r-2}\choose k_{r-1}}\frac{(\lambda_{r}\beta_{r}^k)^{k_{r-1}}}{(\lambda\beta^k)^{z+k_{1}}}\left(\prod_{j=2}^{r-1}(\lambda_{j}\beta_{j}^k)^{k_{j+1}-k_{j}}\right)$.
\end{lemma}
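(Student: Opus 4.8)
The plan is to mirror the proof of Lemma \ref{lem:zetaF_0}, replacing Binet's formula by the general spectral expansion $P_n=\lambda\beta^n+\lambda_2\beta_2^n+\cdots+\lambda_r\beta_r^n$. First I would write
$$\zeta_P(z)=\sum_{n\geqslant 1}P_n^{-z}=\sum_{n\geqslant 1}(\lambda\beta^n)^{-z}\left(1+\sum_{j=2}^r\frac{\lambda_j}{\lambda}\Big(\frac{\beta_j}{\beta}\Big)^n\right)^{-z},$$
expand the bracket by the multinomial version of the generalized binomial theorem, and swap the order of summation (justified by absolute/uniform convergence on compacta away from the poles, exactly as in \cite{N2001} and Proposition \ref{prop:HN2023}). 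Summing the resulting geometric series in $n$ produces a ``main'' term coming from the multi-index $\boldsymbol{k}=\boldsymbol{0}$, namely $\lambda^{-z}\sum_{n\geqslant 1}\beta^{-nz}=\lambda^{-z}/(\beta^z-1)$, plus a remainder series over $\boldsymbol{k}\neq\boldsymbol{0}$ which is holomorphic near $z=0$ and contributes only to the constant and linear coefficients.

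Next I would Taylor-expand each ingredient around $z=0$. For the main term, $\lambda^{-z}=1-z\log\lambda+\tfrac12(\log\lambda)^2z^2+O(z^3)$ and, by the same L'H\^opital computation as in Lemma \ref{lem:zetaF_0},
$$\frac{1}{\beta^z-1}=\frac{1}{\log\beta}\cdot\frac1z-\frac12+\frac{\log\beta}{12}z+O(z^2);$$
multiplying these two expansions gives the pole term $\frac{1}{\log\beta}\cdot\frac1z$, the constant $-\frac{\log\lambda}{\log\beta}-\frac12$, and the $z$-coefficient $\frac{(\log\lambda)^2}{2\log\beta}+\frac{\log\lambda}{2}+\frac{\log\beta}{12}$. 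For the remainder series, as in the Fibonacci case one uses $\frac{1}{\Gamma(z)}=z+O(z^2)$ so that only the $z^0$-term of the analytic remainder survives in the linear coefficient; evaluating that remainder at $z=0$ (and using $\frac{1}{\Gamma(z)\Gamma(k_1)/\Gamma(z+k_1)}\to 1/k_1$-type factors together with the multinomial coefficients ${k_1\choose k_2}\cdots{k_{r-2}\choose k_{r-1}}$) yields exactly the constant $C_1$ displayed in the statement, with the geometric-series sum over $n$ of the ratios $(\lambda_j\beta_j^k/\lambda\beta^k)$ producing the quotient structure $\frac{(\lambda_r\beta_r^k)^{k_{r-1}}}{(\lambda\beta^k)^{z+k_1}}\prod_{j=2}^{r-1}(\lambda_j\beta_j^k)^{k_{j+1}-k_j}$ written there. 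Assembling the pole term, constant, and linear term gives the claimed Laurent expansion.

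The main obstacle is bookkeeping: organizing the multinomial expansion of $\big(1+\sum_{j\geqslant 2}(\lambda_j/\lambda)(\beta_j/\beta)^n\big)^{-z}$ into a clean sum indexed by $\boldsymbol{k}=(k_1,\ldots,k_{r-1})$ with $0\le k_{r-1}\le\cdots\le k_1$, identifying which inner summation is the geometric series in $n$, and confirming the summability that lets one interchange $\sum_n$ and $\sum_{\boldsymbol{k}}$ — here I would simply cite that this is precisely the convergence established in \cite{HN2023} (Proposition \ref{prop:HN2023}) and in \cite{N2001}, since the poles $s(n,\boldsymbol{k})$ all have real part $\le -\log(\beta/|\beta_2|)/\log\beta<0$, so the remainder series is genuinely analytic in a neighborhood of $z=0$. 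A secondary (purely notational) subtlety is matching conventions for $\arg$ and absolute values when $\beta_j$ are complex, but since we only evaluate the remainder at real $z=0$ these phases recombine into the real constant $C_1$, exactly as the symmetric pairing of conjugate roots forced $h(s)$ to be real in Proposition \ref{prop:h}. The rest is the same L'H\^opital and Taylor-coefficient extraction already carried out for $\zeta_F$.
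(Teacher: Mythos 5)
Your proposal is correct and follows essentially the same route as the paper: both start from the $\boldsymbol{k}$-indexed expansion of $\zeta_P(z)$ taken from Serrano Holgado--Navas Vicente, isolate the $\boldsymbol{k}=\boldsymbol{0}$ term as $\lambda^{-z}/(\beta^z-1)$, expand it via the Taylor series of $\lambda^{-z}$ and the L'H\^opital computation for $1/(\beta^z-1)$, and extract the $C_1$ contribution from the $\boldsymbol{k}\neq\boldsymbol{0}$ terms using $1/\Gamma(z)=z+O(z^2)$. (One small imprecision: the remainder series vanishes at $z=0$, so it contributes only from the linear coefficient onward, not to the constant term as your opening paragraph suggests --- your later, detailed account gets this right.)
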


\begin{proof} Following Serrano Holdago and Navas Vicente \cite{HN2023}, we write \begin{align*}\zeta_P(z)&=\sum_{k\geqslant 1}\sum_{\boldsymbol{k}}{-z\choose k_1}{k_1\choose k_2}\cdots{k_{r-2}\choose k_{r-1}}\frac{(\lambda_{r}\beta_{r}^k)^{k_{r-1}}}{(\lambda\beta^k)^{z+k_{1}}}\left(\prod_{j=2}^{r-1}(\lambda_{j}\beta_{j}^k)^{k_{j+1}-k_{j}}\right)\\
&=\sum_{k\geqslant 1}\frac{1}{(\lambda\beta^k)^{z}}+\sum_{k\geqslant 1}\sum_{\boldsymbol{k}\neq\boldsymbol{0}}{-z\choose k_1}{k_1\choose k_2}\cdots{k_{r-2}\choose k_{r-1}}\frac{(\lambda_{r}\beta_{r}^k)^{k_{r-1}}}{(\lambda\beta^k)^{z+k_{1}}}\left(\prod_{j=2}^{r-1}(\lambda_{j}\beta_{j}^k)^{k_{j+1}-k_{j}}\right)\\
&=\frac{\lambda^{-z}}{\beta^z-1}+\sum_{k\geqslant 1}\sum_{\boldsymbol{k}\neq\boldsymbol{0}}\frac{\Gamma(z+k_1)}{\Gamma(z)\, k_1!}{k_1\choose k_2}\cdots{k_{r-2}\choose k_{r-1}}\frac{(\lambda_{r}\beta_{r}^k)^{k_{r-1}}}{(\lambda\beta^k)^{z+k_{1}}}\left(\prod_{j=2}^{r-1}(\lambda_{j}\beta_{j}^k)^{k_{j+1}-k_{j}}\right). 
\end{align*} where $\sum_{\boldsymbol{k}}:=\sum_{k_1\geqslant 0}\sum_{k_2=0}^{k_1}\cdots\sum_{k_{r-1}=0}^{k_{r-2}}.$ Again, using that $\tfrac{1}{\Gamma(z)} = z + O(z^2)$ and $\Gamma(z+k_1)= (k_1-1)! + O(z)$ near $z=0$, along with the asymptotic expansions $$\lambda^{-z}=1-z\log\lambda+\frac{1}{2}(\log\lambda)^2z^2+O(z^3),$$ and $$\frac{1}{\beta^z-1}=\frac{1}{\log\beta}\cdot\frac{1}{z} -\frac{1}{2}  + \frac{\log\beta}{12}z + O(z^2),$$ we have that $$\zeta_P(z)=\frac{1}{\log\beta}\cdot\frac{1}{z}-\frac{\log\lambda}{\log\beta}-\frac{1}{2}+\left(\frac{(\log\lambda)^2}{2\log\beta}+\frac{\log\lambda}{2}+\frac{\log\beta}{12}+C_1\right)z+O(z^2).\qedhere$$\end{proof}

In the following result, we use Lemma \ref{lem:zetaP_0} to determine the asymptotic behaviour of the function $\log F_P(e^{-s})$ as $s\to 0^+$. All of the expansions of the functions involved, $s^{-z}$, $\Gamma(z)$ and $\zeta(1+z)$, have been noted somewhere in the previous sections of this work---we use them below without further reference.

\begin{proposition}\label{prop:logFPGP}
For any fixed $\varepsilon\in(0,1)$, as $s\to0^+$, the function $F_P(z)$  defined above satisfies, $$\log F_P(e^{-s})=\frac{(\log s)^2}{2\log\beta}+\left(\frac{\log\lambda}{\log\beta}+\frac{1}{2}-\frac{2\gamma}{\log\beta}\right)\log s+C_2+f_2(s)+O\big(s^{\min\{{\log(\beta/|\beta_2|)}/{\log\beta},1-\varepsilon\}}),$$ where $$C_2:= \frac{1}{\log\beta}\left(\frac{1}{2}\left(\gamma^2+\frac{\pi^2}{6}\right)+\gamma^2-\gamma_1\right)-\left(\frac{\log\lambda}{\log\beta}+\frac{1}{2}\right)2\gamma+\left(\frac{(\log\lambda)^2}{2\log\beta}+\frac{\log\lambda}{2}+\frac{\log\beta}{12}+C_1\right),$$ and $f_2(s)=f_2(\beta s)+O\big(s^{\min\{{\log(\beta/|\beta_2|)}/{\log\beta},1-\varepsilon\}})$.
\end{proposition}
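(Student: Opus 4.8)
The plan is to mirror, essentially verbatim, the computation carried out for $\log F(e^{-s})$ in Section~\ref{sec:exactF}, but now with $\zeta_F(z)$ replaced by $\zeta_P(z)$ and the pole structure dictated by Proposition~\ref{prop:HN2023}. As noted in the excerpt, we have the Mellin representation $\log F_P(e^{-s})=\frac{1}{2\pi i}\int_{a-i\infty}^{a+i\infty} s^{-z}\Gamma(z)\zeta(1+z)\zeta_P(z)\,dz$ for $a>0$, and (using the functional equation of $\zeta$ exactly as before) shifting the contour to $-\infty$ picks up precisely the sum of residues of the integrand. So the proof is a matter of computing (or bounding) these residues, grouped into the four families listed just before Lemma~\ref{lem:zetaP_0}: the triple pole at $z=0$, the negative non-integer real poles, the double poles at $z\in-\mathbb{N}$, and the off-axis simple poles $z=s(n,\boldsymbol k)$.

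First I would handle the triple pole at $z=0$, which is the source of the main term. Here $s^{-z}$, $\Gamma(z)$ and $\zeta(1+z)$ each contribute a simple pole, and Lemma~\ref{lem:zetaP_0} gives the first three Laurent coefficients of $\zeta_P(z)$ at $z=0$. Multiplying out $s^{-z}\Gamma(z)\zeta(1+z)=\frac{1}{z^2}+\frac{2\gamma-\log s}{z}+\big(\tfrac12(\gamma^2+\tfrac{\pi^2}{6})+\gamma^2-2\gamma\log s+\tfrac12(\log s)^2-\gamma_1\big)+O(z)$ (the same expansion used in the proof of the analogous proposition for $p_F$, just re-expanded), and pairing it against the Laurent expansion of $\zeta_P$ from Lemma~\ref{lem:zetaP_0}, the coefficient of $z^{-1}$ in the product is the residue; collecting powers of $\log s$ yields exactly the stated $\tfrac{(\log s)^2}{2\log\beta}+\big(\tfrac{\log\lambda}{\log\beta}+\tfrac12-\tfrac{2\gamma}{\log\beta}\big)\log s+C_2$. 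This is a routine, if slightly tedious, arithmetic verification and I would present only the key intermediate product and the final grouping.

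Next I would dispose of the remaining three families as error/periodic terms, again in exact parallel with Propositions~\ref{prop:g} and~\ref{prop:h} and Lemma~\ref{lem:1-es}. For the double poles at $z\in-\mathbb{N}$: by Proposition~\ref{prop:HN2023} together with the expansions of $\Gamma(z)$ near negative integers (the analogue of~\eqref{eq:Gamma4n}) and of $\zeta(1+z)$ there, each residue is of the form $\alpha_n s^{n}-\beta_n s^{n}\log s$ with coefficients controlled by Stirling's approximation and $|B_{2n}|\sim 2(2n)!/(2\pi)^{2n}$ exactly as in the proof of Proposition~\ref{prop:g}; since the first such pole is at $z=-1$ (or wherever the first integer $\leqslant -1$ occurs that is not cancelled), the collective contribution is $O(s^{1-\varepsilon})$ because $s\log s=O(s^{1-\varepsilon})$. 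For the negative non-integer real poles at $z\leqslant -\log(\beta/|\beta_2|)/\log\beta$: here $s^{-z}\Gamma(z)\zeta(1+z)$ is analytic, the residue of $\zeta_P$ is given by Proposition~\ref{prop:HN2023}, and each term is $O(s^{-z}\log s)$, the smallest exponent being $\log(\beta/|\beta_2|)/\log\beta$, giving the stated $O(s^{\min\{\log(\beta/|\beta_2|)/\log\beta,\,1-\varepsilon\}})$ bound after summing the geometric tail. For the off-axis simple poles, define $f_2(s)$ to be the sum of their residues; the $\boldsymbol k=\boldsymbol 0$ family has $\Re(s(n,\boldsymbol 0))=0$ and, as in the proof of Proposition~\ref{prop:h}, contributes a term that is manifestly invariant under $s\mapsto\beta s$ (because $(\beta s)^{-s(n,\boldsymbol 0)}$ differs from $s^{-s(n,\boldsymbol 0)}$ by $\beta^{-2\pi i n/\log\beta}=e^{-2\pi i n}=1$ up to the constant $\log\lambda$-shift that is absorbed into $f_2$), while all $\boldsymbol k\neq\boldsymbol 0$ families have $\Re(s(n,\boldsymbol k))>0$ and hence contribute $O(s^{\Re(s(n,\boldsymbol k))})$, collectively $O(s^{\log(\beta/|\beta_2|)/\log\beta})$ after the same Eulerian-polynomial / binomial-coefficient estimates as in~\eqref{eq:hsbound}. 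Finally, as in Lemma~\ref{lem:1-es}, if one has passed between $F_P$ and a shifted product one translates the $\log(1-e^{-s})$ term; here $P_1=1$ already, so this step is vacuous or only a bookkeeping remark. Combining the four contributions gives the displayed formula.

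The main obstacle is the convergence and periodicity analysis of $f_2(s)$ at the off-axis poles, which is the one place where the general linear-recurrence setting is genuinely heavier than the Fibonacci case: the poles are now indexed by the lattice point $\boldsymbol k=(k_1,\dots,k_{r-1})$ with $0\leqslant k_{r-1}\leqslant\cdots\leqslant k_1$ rather than a single integer $k$, and the residue of $\zeta_P$ at $s(n,\boldsymbol k)$ involves a product of binomial coefficients $\binom{k_1}{k_2}\cdots\binom{k_{r-2}}{k_{r-1}}$ and powers of the subdominant $\lambda_j\beta_j^{\,k}$. One must check that the multi-indexed sum still converges absolutely and that the real parts $\Re(s(n,\boldsymbol k))$ are bounded below by $\log(\beta/|\beta_2|)/\log\beta>0$ whenever $\boldsymbol k\neq\boldsymbol 0$ — this is where the dominant-root hypothesis and the ordering constraint on the $k_j$ are used — so that the whole $\boldsymbol k\neq\boldsymbol 0$ part is a convergent error of the claimed order and only the one-parameter $\boldsymbol k=\boldsymbol 0$ family survives to give the $\beta$-multiplicatively-periodic main oscillation. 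Granting the estimates of Propositions~\ref{prop:g} and~\ref{prop:h} generalise (which they do, the bounds being driven by the same Stirling/Eulerian-polynomial mechanism), everything else is bookkeeping.
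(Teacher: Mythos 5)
Your proposal matches the paper's proof: the paper likewise computes the residue of $s^{-z}\Gamma(z)\zeta(1+z)\zeta_P(z)$ at $z=0$ by pairing exactly the expansion of $s^{-z}\Gamma(z)\zeta(1+z)$ you write down against Lemma~\ref{lem:zetaP_0}, and then disposes of the remaining pole families by reference to the discussion preceding that lemma, just as you do (indeed you supply more detail on the convergence of the multi-indexed sum and the lower bound forced by the dominant-root hypothesis than the paper itself does). The only blemish is a sign slip in your final paragraph --- for $\boldsymbol{k}\neq\boldsymbol{0}$ it is $\Re(-s(n,\boldsymbol{k}))$, not $\Re(s(n,\boldsymbol{k}))$, that is bounded below by $\log(\beta/|\beta_2|)/\log\beta>0$ --- but the conclusion you draw from it is the correct one.
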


\begin{proof} Note that near $z=0$, we have \begin{align*}s^{-z}\Gamma(z)\zeta(1+z)=\frac{1}{z^2}+(2\gamma-&\log s)\frac{1}{z}\\ &+\left(\frac{1}{2}\left(\gamma^2+\frac{\pi^2}{6}\right)+\gamma^2-\gamma_1-2\gamma\log s+\frac{(\log s)^2}{2}\right)+O(z),\end{align*} so that, using Lemma \ref{lem:zetaP_0}, we have that \begin{align*}\res{z}{0}{s^{-z}\Gamma(z)\zeta(1+z)\zeta_P(z)}&=\frac{1}{\log\beta}\left(\frac{1}{2}\left(\gamma^2+\frac{\pi^2}{6}\right)+\gamma^2-\gamma_1-2\gamma\log s+\frac{(\log s)^2}{2}\right)\\
&\qquad-\left(\frac{\log\lambda}{\log\beta}+\frac{1}{2}\right)(2\gamma-\log s)\\
&\qquad\qquad+\left(\frac{(\log\lambda)^2}{2\log\beta}+\frac{\log\lambda}{2}+\frac{\log\beta}{12}+C_1\right)\\
&=\frac{(\log s)^2}{2\log\beta}+\left(\frac{\log\lambda}{\log\beta}+\frac{1}{2}-\frac{2\gamma}{\log\beta}\right)\log s+C_2.
\end{align*} The contributions coming from the rest of the poles are as discussed before the statement of Lemma~\ref{lem:zetaP_0}.
\end{proof}

On inspection, one notices that the dominant asymptotic terms of $\log F_P(e^{-s})$ are precisely the dominant asymptotic terms of $\log F_2(e^{-s})$, the function related to the Fibonacci partitions, after substituting $\beta$ for $\varphi$. Of course, this is not so surprising, as the Fibonacci numbers $F_k$ are just a special case of the more general sequence $P_k$. The property of note, here, is regarding the associated saddle point. Since the leading order behaviour is the same, the saddle points satisfy the same leading order asymptotics. In particular, Proposition~\ref{prop:alphan} and its corollaries hold for the saddle point $\alpha_{F_P}$ related to non-distinct partitions of $n$ over $P_k$, and so also then, do \eqref{eq:T1} and \eqref{eq:T2}. We use these results, as well as this notation, below. 

\begin{proof}[Proof of Theorem \ref{thm:genpart}] By Proposition~\ref{prop:logFPGP}, Proposition~\ref{prop:alphan} and its corollaries, the small-$\alpha_{F_P}$ asymptotics, or, equivalently, the large-$n$ asymptotics satisfy, \begin{align*}\log F_P(e^{-\alpha_{F_p}})&=\frac{(\log \alpha_{F_p})^2}{2\log\beta}-(\log \alpha_{F_p})\left(\frac{2\gamma}{\log\beta}-\frac{\log\lambda}{\log\beta}-\frac{1}{2}\right)\\
&\qquad\qquad+C_2+f_2(\alpha_{F_p})+O\Big(\alpha_{F_p}^{\min\{{\log(\beta/|\beta_2|)}/{\log\beta},1-\varepsilon\}}\Big)\\
&=\frac{\log n}{2\log\beta}\left(\log n-2\log\log n-\frac{4\psi_3\hspace{-.1cm}\left(\frac{\log n}{\log\varphi}\right)}{\log\beta}+2\log\log\beta+4\gamma -2\log\lambda-\log\beta\right)\\
&\qquad+\frac{\log\log n}{2\log\beta}\left(\log\log n+2-\frac{4\psi_3\hspace{-.1cm}\left(\frac{\log n}{\log\beta}\right)}{\log\beta}+2\log\log\beta-4\gamma +2\log\lambda+\log\beta\right)\\
&\qquad\qquad+\frac{(\log\log\beta)^2}{2\log\beta}-\frac{\psi_3\hspace{-.1cm}\left(\frac{\log n}{\log\beta}\right)}{(\log\beta)^2}+(1+2\gamma-\log\lambda)\frac{\log\log\beta}{\log\beta}-\frac{1}{2}\log\log\beta\\
&\qquad\qquad\qquad+C_2+\psi_4\hspace{-.1cm}\left(\frac{\log n}{\log\beta}\right)+O\left(\frac{(\log\log n)^2}{\log n}\right),\end{align*}
where $\psi_3(x)$ and $\psi_4(x)$ are explicitly computable $1$-periodic functions that are analogous to $\psi_0(x)$ and $\psi_1(x)$, respectively. Thus, we have \begin{equation*}F_P(e^{-\alpha_{F_P}})=\psi_5\hspace{-.1cm}\left(\frac{\log n}{\log\beta}\right) n^{c(n)}(\log n)^{d(n)}\left(1+O\left(\frac{(\log\log n)^2}{\log n}\right)\right),\end{equation*} where $\psi_5(n)$ is the $1$-periodic positive function \begin{align*}\psi_5(x)&:=\exp\Bigg(\frac{(\log\log\beta)^2}{2\log\beta}-\frac{\psi_3(x)}{(\log\beta)^2}+(1+2\gamma-\log\lambda)\frac{\log\log\beta}{\log\beta}-\frac{1}{2}\log\log\beta+C_2+\psi_4(x)\Bigg),\\
c(n)&:=\frac{1}{2\log\beta}\left(\log n-2\log\log n-\frac{4\psi_3\hspace{-.1cm}\left(\frac{\log n}{\log\varphi}\right)}{\log\beta}+2\log\log\beta+4\gamma -2\log\lambda-\log\beta\right),\\
d(n)&:=\frac{1}{2\log\beta}\left(\log\log n+2-\frac{4\psi_3\hspace{-.1cm}\left(\frac{\log n}{\log\beta}\right)}{\log\beta}+2\log\log\beta-4\gamma +2\log\lambda+\log\beta\right).\end{align*} Combing the asymptotic for $F_P(e^{-\alpha_{F_P}})$ with equations \eqref{eq:T1} and \eqref{eq:T2} gives the desired result.
\end{proof}

\subsection*{Acknowledgements}
This work was started during a short visit of M.~Coons to Aarhus University; he thanks their faculty and staff for their hospitality and support. S.~Kristensen and M.~L.~Laursen are supported by the Independent Research Fund Denmark (Grant ref. 1026-00081B) and the Aarhus University Research Foundation (Grant ref. AUFF-E-2021-9-20), and M.~Coons is supported by the David W.~and Helen E.~F.~Lantis Endowment. The idea for this work came about when M.~Coons was a master's student working under Klaus Kirsten---this work benefitted from several conversations with Klaus Kirsten and also Markus Hunziker, which occurred nearly twenty years ago! M.~Coons gives a hearty, and very belated, thanks to both of them.

\bibliographystyle{amsplain}
\providecommand{\bysame}{\leavevmode\hbox to3em{\hrulefill}\thinspace}
\providecommand{\href}[2]{#2}


\end{document}